\newtheorem{theorem}{Theorem}[section]
\newtheorem{lemma}[theorem]{Lemma}
\newtheorem{proposition}[theorem]{Proposition}
\newtheorem{corollary}[theorem]{Corollary}
\newtheorem{example}[theorem]{Example}
\newtheorem{assumption}[theorem]{Assumption}
\newtheorem{remark}[theorem]{Remark}
\newcommand{\R}{\mathbb{R}}
\newcommand{\N}{\mathbb{N}}
\newcommand{\F}{\mathcal{F}}
\renewcommand{\G}{\mathfrak{g}}
\newcommand{\G}{\mathfrak{g}}
\newcommand{\f}[2]{\frac{#1}{#2}}
\newcommand{\pa}{\partial}
\newcommand{\dt}{\Delta t}
\newcommand{\barXi}{\overline{\Xi}}
\newcommand{\abs}[1]{\left\lvert#1\right\rvert} 
\newcommand{\norm}[1]{\left\lVert#1\right\rVert} 
\newcommand{\fB}{\mathbf{B}}
\newcommand{\fomega}{\boldsymbol{\omega}}
\newcommand{\Cbv}{C^{\text{1-var}}}
\DeclareMathOperator{\id}{id}
\newcommand{\Prefix}[3]{\vphantom{#3}#1#2#3}
\begin{document}

\title{Cubature on Wiener space: pathwise convergence}

\author[Ch.~Bayer]{Christian Bayer}
\address{WIAS Berlin\\Mohrenstra\ss{}e 39\\10117 Berlin\\Germany}
\email{christian.bayer@wias-berlin.de}

\author[P.~Friz]{Peter K.~Friz}
\address{TU Berlin\\Institute of Mathematics, MA 7-2\\Stra\ss{}e des 17. Juni
  136\\10623 Berlin\\Germany}
\address{WIAS Berlin\\Mohrenstra\ss{}e 39\\10117 Berlin\\Germany}
\email{friz@math.tu-berlin.de, peter.friz@wias-berlin.de}

\thanks{P.~K.~Friz has received funding from the European Research Council
  under the European Union's Seventh Framework Programme (FP7/2007-2013) / ERC
  grant agreement nr.~258237.}

\begin{abstract}
  Cubature on Wiener space [Lyons, T.; Victoir, N.; Proc.~R.~Soc.~Lond.~A 8
  January 2004 vol. 460 no.~2041 169-198] provides a powerful alternative to
  Monte Carlo simulation for the integration of certain functionals on Wiener
  space. More specifically, and in the language of mathematical finance,
  cubature allows for fast computation of European option prices in generic
  diffusion models.

  We give a random walk interpretation of cubature and similar (e.g.~the
  Ninomiya--Victoir) weak approximation schemes. By using rough path analysis,
  we are able to establish weak convergence for general path-dependent
  option prices.
\end{abstract}

\keywords{Cubature, rough paths, weak approximation schemes for SDEs,
pricing of path-dependent options}

\subjclass[2000]{60F17,60H35,91G60}


\maketitle

\section{Introduction}
\label{sec:introduction}

Cubature on Wiener space (Kusuoka~\cite{kus01,kus04}, Lyons and
Victoir~\cite{lyo/vic04}, see also Litterer and Lyons~\cite{lit/lyo07},
Ninomiya and Victoir~\cite{nin/vic08}, Ninomiya and Ninomiya~\cite{nin/nin09})
provides a powerful alternative to Monte Carlo simulation for the integration
of certain functionals on Wiener space. As of present, these functionals are
of the form $f\left( S_{T}\right) $ where $S_{T}$ is the image of a
$d$-dimensional Brownian motion under the It\^{o}-map (the solution map to a
stochastic differential equation); the aim of cubature on Wiener space is then
to provide a fast numerical algorithm to compute $ E\left[ f\left( S_{T}
  \right) \right] $, where the expectation is taken over the $d$-dimensional
Wiener measure. 

In the language of mathematical finance, cubature deals with European option
prices in generic diffusion models. Although some exotic options can be
handled in this framework\ (e.g. Asian options, by enhancing the state-space)
general path-dependent options are not included in the presently available
analysis on cubature methods. It must be admitted that cubature has been
designed for fast evaluation of payoffs of the type $f\left( S_{T}\right) ;$
but even so, it may maintain its benefits in mildly\ path-dependent situation
and, in any case, convergence to the correct value will be considered a
minimal requirement by most users.

The answer to ``How can it fail to converge to the correct value?'' is not
trivial: cubature methods are essentially derived from replacing
Wiener-measure but a path-space measure supported on smooth paths $\left\{
  \omega _{i}\right\} $, subject to certain technical conditions relating to
the iterated integrals of these paths. Stochastic differential equations,
however, are far from stable under perturbations in the iterated integrals:
recall the well-known examples of McShane~\cite{mcs72} which give uniform
approximations to Brownian motion where the limiting differential equation
exhibits bias in the form of additional drift terms. (The explanation is that
these approximations do \emph{not} correctly approximate the iterated
integrals of Brownian motion known as L\'{e}vy's stochastic area.) At the risk
of confusing the reader, even if is guaranteed that a sample path \emph{and
  its stochastic area} are uniformly correctly approximated, the limiting
differential equation may still exhibit additional drift terms\footnote{
  ... now involving two or more iterated Lie brackets of the diffusion vector
  fields; see ~\cite{fri/obe09} for more on such subtleties.}. The point is
that topology matters: uniform convergence needs to be replaced by a stronger
notion of H\"{o}lder (or $p$-variation) rough path topology in order to use
the stability results of rough path theory.

Our key idea is to view the iterations of cubature steps, (Lyons and
Victoir~\cite[Theorem 3.3]{lyo/vic04} for instance), via an underlying
random walk of the driving signal, Brownian motion plus L\'{e}vy's area. The
iterated cubature scheme corresponds precisely to stochastic differential
equations in which the driving Brownian motion is replaced by $k$ properly
rescaled concatenations of the $\omega _{i}$ (say, chosen independently with
probability $\lambda _{i}$ at each step). Thanks to the smoothness of the
$\omega _{i}$, such a path has canonically defined iterated integrals; the
``only'' thing left to do is to establish weak convergence of this random walk
to Brownian motion and L\'{e}vy's stochastic area, in the correct rough path
topology. It is then an immediate consequence of the continuity of the It\^{o}
map in rough path sense (i.e. as a deterministic function of path and area in
rough path topology) to see that this entails the desired weak convergence
result for path dependent functionals of the type $f\left( S_{t}:0\leq t\leq
  T\right)$.

Weak convergence questions of this type were first discussed in E.~Breuillard,
P.~Friz, M.~Huesmann~\cite{bre/fri/hue09}. Unfortunately, the ``Rough path
Donsker'' theorem obtained therein does not lend itself immediately to the
present applications: a moment of reflection reveals that it would cover
cubature with (1)\ equidistant steps and (2)\ in which the $\omega _{i}$ are
straight lines (Wong-Zakai!). Our strategy is thus to develop refined
arguments that allow to cover the generic cubature setting as well as its
recent variations\ (like Ninomya--Victoir). This leads, en passant, to a more
flexible version of the Donsker theorem for Brownian motion on Lie groups in
topologies considerably finer than the uniform one.

The mathematical content -- weak convergence of discrete structures to
(Stratonovich) SDE solutions -- should also be compared to the (typically
It\^{o}) diffusion limits of Markov chains (cf.~Stroock and
Varadhan~\cite[Section 11.2]{str/var73}), although we shall not pursue this
point further here.

The current paper uses many ideas and results from rough path theory, see
Lyons~\cite{lyo98} and Friz and Victoir~\cite{fri/vic10}, which we primarily
use for reference in this paper. For cubature on Wiener space, the
authoritative reference remains Lyons and Victoir~\cite{lyo/vic04}.

\section{Cubature on Wiener space and the associated random walks}
\label{sec:cubat-wien-space}

Let $B = (B_t)_{t \in [0,1]}$ denote a standard $d$-dimensional Brownian
motion on $\left( \Omega, \F, (\F_t)_{t \in [0,1]}, P \right)$ and $\fB_\cdot
= S_2(B)_{0,\cdot}$, i.e., $\fB$ is the Brownian motion enhanced by its L\'evy
area. The geometrical setting of $\fB$ is the Lie group $G^2(\R^d)$, which can
be defined as follows: let $e_1, \ldots, e_d$ denote the canonical basis of
$\R^d$. Then $e_i \otimes e_j$, $1\le i,j \le d$, forms a basis for the tensor
product $\R^d \otimes \R^d$. Consider the algebra $T^2(\R^d) \coloneqq \R
\oplus \R^d \oplus \R^d \otimes \R^d$, which is understood as a step-$2$
nilpotent non-commutative algebra, i.e., for $\mathbf{x}_1 = z_1+x_1+a_1,
\mathbf{x}_2 = z_2+x_2+a_2 \in T^2(\R^d)$ the product is given by
\begin{equation*}
  \mathbf{x}_1 \otimes \mathbf{x}_2 = z_1 z_2 + (z_2x_1 + z_1x_2) + \left(
    z_2a_1 + z_1a_2 + x_1 \otimes x_2 \right).
\end{equation*}
Consider $\G^2(\R^d) \subset T^2(\R^d)$, the Lie-algebra generated by $e_1,
\ldots, e_d$ together with $[e_i, e_j]$, $1 \le i < j \le d$, with the
commutator defined by $[\mathbf{x}, \mathbf{y}] \coloneqq \mathbf{x} \otimes
\mathbf{y} - \mathbf{y} \otimes \mathbf{x}$, $\mathbf{x}, \mathbf{y} \in
T^2(\R^d)$. The exponential map $\exp: T^2(\R^d) \to T^2(\R^d)$
defined by 
\begin{equation*}
  \exp(\mathbf{x}) \coloneqq 1 + \sum_{k=1}^\infty \f{1}{k!}
  \mathbf{x}^{\otimes k} = 1 + \mathbf{x} + \f{1}{2} \mathbf{x} \otimes
  \mathbf{x} 
\end{equation*}
in the step-$2$ nilpotent setting, maps $\G^2(\R^d)$ in a bijective way to the
Lie group $G^2(\R^d) \coloneqq \exp(\G^2(\R^d)) \subset T^2(\R^d)$.

This Lie group is highly relevant for rough path analysis, since it is the
geometric setting of the enhanced Brownian motion mentioned before. Indeed,
the $T^2(\R^d)$-valued process $\fB$ defined by
\begin{equation}
  \label{eq:enhanced-BM}
  \fB_t \coloneqq 1 + \sum_{i=1}^d B^i_t e_i + \sum_{i,j=1}^d \int_0^t B^i_s
  \circ dB^j_s \, e_i \otimes e_j \eqqcolon S_2(B)_{0,t}, \quad 0 \le t \le 1,
\end{equation}
lives in the Lie group $G^2(\R^d)$, i.e., $P\left( \fB_t \in G^2(\R^d), \, t
  \in [0,1] \right) = 1$. In a similar way, we will consider the (step-$m$
truncated) \emph{signature}, see Friz and Victoir~\cite{fri/vic10},
\begin{equation}
  \label{eq:signature}
  S_m(B)_{0,t} \coloneqq 1 + \sum_{k=1}^m \sum_{i_1, \ldots, i_k \in \{1,
    \ldots, d\}} \int_{0 \le t_1 \le \cdots \le t_k \le t} \circ
  dB^{i_1}_{t_1} \cdots \circ dB^{i_k}_{t_k}\, e_{i_1} \otimes \cdots \otimes
  e_{i_k},
\end{equation}
which takes values in the step-$m$ nilpotent Lie-group $G^m(\R^d)$ defined
analogously to $G^2(\R^d)$.\footnote{Note that $S_m(B)_{0,\cdot} =
  S_m(\fB)_{0,\cdot}$, the \emph{Lyons lift} of the enhanced Brownian motion,
  reflecting the fact that $S_m(B)$ depends uniquely and continuously on $\fB$
  -- whereas $\fB$ itself is not uniquely and certainly not continuously given
  by $B$. For instance, we could have chosen the Ito-integral instead of the
  Stratonovich integral.}

Consider the stochastic differential equation (in Stratonovich form)
\begin{equation}
  \label{eq:sde}
  dX_t = V_0( X_t) dt + \sum_{i=1}^d V_i(X_t) \circ dB^i_t,
\end{equation}
$X_0 = x_0 \in \R^N$. Here, $V_0, V_1, \ldots, V_d: \R^N \to \R^N$ is a
collection of smooth vector fields. A \emph{cubature formula on Wiener space}
is a random variable $W$ taking values in the space $\Cbv([0,1], \R^d)$ of
continuous paths of bounded variation with values in $\R^d$ such that we have
\begin{equation}
  \label{eq:cubature-formula}
  E \left[ \int_{0 \le t_1 \le \cdots \le t_k \le 1} \circ dB^{i_1}_{t_1}
    \cdots \circ dB^{i_k}_{t_k} \right] = E\left[ \int_{0 \le t_1 \le
      \cdots \le t_k \le 1} dW^{i_1}_{t_1} \cdots dW^{i_k}_{t_k} \right].
\end{equation}
Equation~(\ref{eq:cubature-formula}) is supposed to hold for all multi-indices
$I = (i_1, \ldots, i_k) \in \{1,\ldots,d\}^k$ with $k \le
m$ and all $1 \le k \le m$, where $m$ is a fixed positive integer, the
\emph{order} of the cubature formula. Moreover, we note that
the paths of the process $W$ are of bounded variation, therefore the integrals
on the right hand side of~(\ref{eq:cubature-formula}) can be classically defined
in a pathwise sense. Notice that we do not use cross-integrals between time
$dt$ and the Brownian motion $dB_t$. Therefore, a cubature formula in this
sense can only be used to approximate SDEs with drift $V_0 \equiv 0$. We will
cover the general case later in Section~\ref{sec:main-result}.

Rephrased in terms of the (truncated) signature,
equation~(\ref{eq:cubature-formula}) means that
\begin{equation}
  \label{eq:cubature-formula-signature}
  E\left[S_m(B)_{0,1}\right] = E\left[S_m(W)_{0,1}\right],
\end{equation}
where the expectation takes values in the algebra $T^m(\R^d)$. Obviously, any
cubature formula on Wiener space can be rescaled to a cubature formula on the
interval $[0,\dt]$, $\dt > 0$, by replacing $W$ with the bounded variation
path
\begin{equation}
  \label{eq:rescaled-cubature-formula}
  \delta_{\sqrt{\dt}}(W): [0,\dt] \to \R^d, \quad s \mapsto \sqrt{\dt}
  W(s/\dt).
\end{equation}
On the level of signatures, this corresponds to applying the \emph{dilatation}
operator $\delta_{\sqrt{\dt}}: G^m(\R^d) \to G^m(\R^d)$, i.e.,
\begin{equation*}
  S_m\left( \delta_{\sqrt{\dt}}(W) \right)_{0,\dt} = \delta_{\sqrt{\dt}}(
  S_m(W)_{0,1} ).
\end{equation*}
\begin{remark}
  Note that the symbol $\delta_{\sqrt{\dt}}$ has different meanings on both
  sides of the equation: on the left hand side, it is a function from
  $C([0,1], \R^d)$ to $C([0,\dt], \R^d)$, whereas on the right hand side it is
  the restriction to $G^m(\R^d)$ of a linear map defined on the algebra
  $T^m(\R^d)$ by
  \begin{equation*}
    \delta_{\sqrt{\dt}}\left( e_{i_1} \otimes \cdots \otimes e_{i_k} \right)
    \coloneqq \dt^{k/2} e_{i_1} \otimes \cdots \otimes e_{i_k}, \quad 0 \le k
    \le m, \quad i_1, \ldots, i_k \in \{1, \ldots, d\}.
  \end{equation*}
\end{remark}

Given a mesh $\mathcal{D} = \{0 = t_0 < t_1 < \cdots < t_n = 1\}$, set $\dt_k
\coloneqq t_k - t_{k-1}$, $k = 1, \ldots, n$, and $\abs{\mathcal{D}} = \max_k
\dt_k$. Moreover, let $W_{(1)}, \ldots, W_{(n)}$ be independent copies of the
cubature formula $W$. We define a random variable $W^{\mathcal{D}}: [0,1] \to
\R^d$ taking values in the space of continuous paths of bounded variation by
concatenation of the paths $\delta_{\sqrt{\dt_k}}(W_{(k)}): [0, \dt_k] \to
\R^d$, $k = 1, \ldots, n$. Again, by well known properties of the signature
(the Chen theorem, see for instance~\cite[Theorem 7.11, Exercise
7.14]{bre/fri/hue09}), this translates to the relation
\begin{align*}
  S_m\left( W^{\mathcal{D}} \right)_{0,1} &= S_m\left(
    \delta_{\sqrt{\dt_1}}(W_{(1)}) 
  \right)_{0,\dt_1} \otimes \cdots \otimes S_m\left( \delta_{\sqrt{\dt_n}}(W_{(n)})
  \right)_{0,\dt_n} \\
  &= \delta_{\sqrt{\dt_1}} \left(S_m( W_{(1)} )_{0,1}\right) \otimes
  \cdots \otimes \delta_{\sqrt{\dt_n}} \left( S_m( W_{(n)} )_{0,1} \right),
\end{align*}
where $\otimes$ denotes the multiplication in the Lie group $G^m(\R^d)$.
Finally, let $X^{\mathcal{D}}$ denote the (pathwise ODE) solution of the
equation
\begin{equation}
  \label{eq:cubature-ODE}
  dX^{\mathcal{D}}_t = \sum_{i=1}^d V_i(X^{\mathcal{D}}_t) dW^{\mathcal{D},i}_t,
\end{equation}
$X^{\mathcal{D}}_0 = x_0$. For a given function $f: \R^N \to \R$ of interest,
the method of cubature on Wiener space now consists in the approximation
\begin{equation}\label{eq:cubature-approximation}
  E[f(X_1)] = E\left[f(X^{\mathcal{D}}_1)\right] + \mathcal{O}\left(
    \abs{D}^{(m-1)/2} \right),
\end{equation}
provided that certain
regularity assumptions are satisfied, see~\cite{lyo/vic04},~\cite{nin/vic08}
and~\cite{kus04}. In particular, the method provides an efficient numerical
scheme, if $W$ has been chosen in such a way that integration
of~(\ref{eq:cubature-ODE}) is ``substantially simpler'' then integration of
the original~(\ref{eq:sde}), see~\cite{bay/fri/loe12}. If $f$ is
smooth,~\eqref{eq:cubature-approximation} holds even for uniform meshes. If
$f$ only is Lipschitz, however, then Kusuoka~\cite[Theorem 4]{kus04} shows
that~\eqref{eq:cubature-approximation} holds provided that one takes certain
non-homogeneous meshes. The goal of this paper regarding cubature is to show
that convergence even holds for (reasonable) functionals $f$ depending on the
whole path $(X_t)_{0 \le t \le 1}$.

\begin{example}
  \label{ex:cubature-lyons-victoir}
  The cubature formulas in~\cite{lyo/vic04} are discrete random variables $W$
  taking values in the space of continuous paths of bounded variations. That
  is, fix $k$ paths of bounded variation $\omega_1, \ldots, \omega_k: [0,1]
  \to \R^d$ and positive real numbers $\lambda_1, \ldots, \lambda_k$ with
  $\lambda_1 + \cdots + \lambda_k = 1$. Then $W$ is the random variable taking
  values in $\{\omega_1, \ldots, \omega_k\}$ with $P(W = \omega_j) =
  \lambda_j$, $j = 1, \ldots, k$. In all the concrete cubature formulas
  constructed in~\cite{lyo/vic04}, the paths $\omega_j(\cdot)$ are, in fact,
  piecewise linear. 
\end{example}

\begin{example}
  \label{ex:wong-zakai-cubature}
  We can even interpret the Wong-Zakai approximation as a cubature formula on
  Wiener space (of order $m=3$; the resulting convergence in
  ~\eqref{eq:cubature-approximation} has then weak order $(m-1)/2=1$,
  precisely as the usual Euler scheme for Ito differential equations). Indeed,
  choose $W$ as the linear path $W_t = t 
  B_1$. We note that $W^{\mathcal{D}}$ can be realized (for any mesh
  $\mathcal{D}$) by choosing $\delta_{\sqrt{\dt_k}}(W_{(k)})(s) =
  s(B_{t_k}-B_{t_{k-1}})$, because $W_{(k)}(t) \coloneqq \f{t}{\sqrt{\dt_k}}
  (B_{t_k}-B_{t_{k-1}})$ has the same law as $W$ and all the $W_{(k)}$ are
  independent. Concatenation of these paths precisely gives the piecewise
  linear approximation of $B$ with nodes in $\mathcal{D}$.
\end{example}

\begin{example}
  \label{ex:cubature-ninomiya-victoir}
  Ninomiya and Victoir~\cite{nin/vic08} construct a cubature formula of order
  $m = 5$ in the following way. Let $\Lambda$ be a Bernoulli random variable
  (taking values $\pm 1$ with probability $1/2$ each) and let $Z^1, \ldots,
  Z^d$ be independent standard normal random variables. Set $\varepsilon =
  1/(d+1)$. For $\omega \in \Omega$, $W(\omega)$ is defined by the following
  formula. If $\Lambda(\omega) = -1$, we define $W(\omega)$ to be the
  piecewise linear path with
  \begin{equation*}
    \dot{W}^i(\omega)(s) =
    \begin{cases}
      1/\varepsilon, & s \in [0,\varepsilon/2], \ i = 0,\\
      Z^i(\omega)/\varepsilon, & s \in ]\varepsilon/2 + (i-1)\varepsilon,
      \varepsilon/2+i\varepsilon], \ i \in \{1, \ldots, d \}, \\
      1/\varepsilon, & s \in ]1-\varepsilon/2,1], \ i = 0,\\
      0, & \text{else}.
    \end{cases}
  \end{equation*}
  If $\Lambda(\omega) = 1$, $W(\omega)$ is similarly defined by
  \begin{equation*}
    \dot{W}^i(\omega)(s) =
    \begin{cases}
      1/\varepsilon, & s \in [0,\varepsilon/2], \ i = 0,\\
      Z^i(\omega)/\varepsilon, & s \in ]\varepsilon/2 + (d-i)\varepsilon,
        \varepsilon/2+(d-i+1)\varepsilon], \ i \in \{1, \ldots, d \}, \\
      1/\varepsilon, & s \in ]1-\varepsilon/2,1], \ i = 0,\\
      0, & \text{else}.
    \end{cases}
  \end{equation*}
  This means, we subdivide the interval $[0,1]$ into $d+2$ subintervals
  \begin{equation*}
    \left[0,\f{\vphantom{3}\varepsilon}{2}\right] \cup \left]\f{\varepsilon}{2},
        \f{3\varepsilon}{2}\right] \cup \cdots \cup \left] 1-
        \f{3\varepsilon}{2}, 1-\f{\varepsilon}{2}\right] \cup
      \left]1-\f{\vphantom{3}\varepsilon}{2}, 1\right]. 
  \end{equation*}
  On each of these subintervals, $W(\omega)$ is constant in all components
  albeit one, which is linear. In particular, $W(\omega)$ is again piecewise
  linear.

  At this stage, we would like to remark that we could replace the Gaussian
  random variables $Z^i$ by discrete random variables having the same moments
  of order up to five. Then we would obtain a special case of
  Example~\ref{ex:cubature-lyons-victoir} -- albeit for the non-standard
  choice of $W^0$, see Section~\ref{sec:main-result} below.
\end{example}

Let us now turn our attention to Donsker type results: for a fixed sequence of
meshes $\mathcal{D}_n$ with $\abs{\mathcal{D}_n} \to 0$ we wish to study the
corresponding sequence of paths in $G^m(\R^d)$, i.e., we study
\begin{equation*}
  S_m\left( W^{\mathcal{D}_n} \right)_{0,\cdot} = \left( S_m\left(
      W^{\mathcal{D}_n} \right)_{0,t} \right)_{t \in [0,1]}.
\end{equation*}
By a \emph{Donsker theorem in rough path topology} for the sequence
of cubature formulas $W^{\mathcal{D}_n}$ we understand the statement that
\begin{equation}
  \label{eq:donsker-cubature}
  S_m\left( W^{\mathcal{D}_n} \right)_{0,\cdot} \xrightarrow[n \to \infty]{}
  S_m( \fB )_{0,\cdot}
\end{equation}
weakly with respect to $\alpha$-H\"{o}lder rough path topology\footnote{I.e.,
  the $\alpha$-H\"older topology for functions taking values in the metric
  space $(G^2(\R^d), \norm{\cdot})$.,
see~\cite[Definition 5.1, 9.15]{fri/vic10}}, for some $\alpha \in (1/3, 1/2)$
and $m \ge 2$. (In fact, elementary results of rough path theory imply then
that it suffices to consider $m=2$. Also, the claimed convergence will
actually be established for {\it all} $\alpha < 1/2$). As a justification for
calling the convergence stated in~(\ref{eq:donsker-cubature}) a Donsker
theorem, consider the following random walk. Let us again fix the mesh
$\mathcal{D}_n = \{0= t_0 < \cdots < t_n = 1 \}$. (We only take $n$ as the
number of sub-intervals for the grid $\mathcal{D}_n$ for more convenient
notation. The mathematics would, of course, work in precisely the same way, if
the size of $\mathcal{D}_n$ was completely arbitrary, as long as
$\abs{\mathcal{D}_n} \to 0$.) Define
\begin{equation}
  \label{eq:def-xik}
  \xi^n_k = S_m\left( \delta_{\sqrt{\dt_k}}( W_{(k)} ) \right)_{0,\dt_k} =
  \delta_{\sqrt{\dt_k}}(S_m(W_{(k)})_{0,1}),
\end{equation}
a random variable taking values in $G^m(\R^d)$. Note that $\xi^n_k =
\delta_{\sqrt{\dt_k}}(\xi_{(k)})$, where $\xi_{(k)}$ is an independent copy of
$S_m(W)_{0,1}$. Next define the $G^m(\R^d)$-valued, finite random walk
$\Xi^n_k$, $k = 0, \ldots,n$, by $\Xi^n_0 = 1$ and
\begin{equation*}
  \Xi^n_{k+1} = \Xi^n_k \otimes \xi^n_{k+1},
\end{equation*}
where $1$ is the neutral element of $G^m(\R^d)$. Since
\begin{equation*}
  S_m \left( W^{\mathcal{D}_n} \right)_{0,t_k} = \Xi^n_k, \quad k = 0, \ldots, n,
\end{equation*}
$S_m\left( W^{\mathcal{D}_n} \right)_{0,\cdot}$ is, indeed, a path in
$G^m(\R^d)$ obtained from the random walk $\Xi^n$ by (possibly random)
interpolation. This gives the link to the classical Donsker theorem as well as
to the paper of Breuillard, Friz and Huesmann~\cite{bre/fri/hue09}. Let us
rephrase their Theorem~3 for the current setting.
\begin{proposition}
  \label{prop:1}
  Let $W$ be a cubature formula on Wiener space of order $m = 2$ with finite
  moments of all orders in the sense that
  \begin{equation*}
    \forall q \ge 1: E\left[\norm{S_2(W)_{0,1}}^q\right] < \infty,
  \end{equation*}
  where $\norm{\cdot}$ denotes the Carnot-Caratheodory norm on $G^2(\R^d)$,
  see below. Moreover, assume that $W$ is chosen in such a way
  that for every $\omega$, $S_2(W(\omega))_{0,\cdot}$ is a geodesic connecting
  $1$ and $S_2(W(\omega))_{0,1}$. Choose uniform meshes $\mathcal{D}_n =
  \left\{\left.\f{k}{n} \,\right|\, k = 0, \ldots, n \right\}$. Then the
  Donsker theorem holds in rough path topology, i.e.,
  $S_2\left(W^{\mathcal{D}_n}\right)_{0,\cdot}$ converges to $\fB$ in
  $C^{0,\alpha-\text{H\"{o}l}}( [0,1], G^2(\R^d))$, for every $\alpha < 1/2$.
\end{proposition}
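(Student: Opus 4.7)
The plan is to deduce Proposition~\ref{prop:1} directly from \cite[Theorem 3]{bre/fri/hue09}, a central limit theorem for geodesic random walks on Carnot groups formulated exactly in the $\alpha$-H\"older rough path topology. The task is therefore one of verification: I need to recognise $S_2(W^{\mathcal{D}_n})_{0,\cdot}$ as the geodesic interpolation of a suitable random walk on $G^2(\R^d)$ and check that the increment statistics satisfy the hypotheses of the cited theorem.

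First I identify the interpolation structure. By Chen's theorem,
\begin{equation*}
  S_2(W^{\mathcal{D}_n})_{0,t_k} = \Xi^n_k = \xi^n_1 \otimes \cdots \otimes \xi^n_k, \qquad \xi^n_k = \delta_{1/\sqrt{n}}(\xi_{(k)}),
\end{equation*}
with the $\xi_{(k)}$ i.i.d.\ copies of $S_2(W)_{0,1}$ in $G^2(\R^d)$, and the mesh being uniform, $\dt_k \equiv 1/n$. The hypothesis that $s \mapsto S_2(W(\omega))_{0,s}$ traces a Carnot--Carath\'eodory geodesic from $1$ to $S_2(W(\omega))_{0,1}$, together with the left-invariance of the Carnot--Carath\'eodory distance under $\otimes$-translation by $\Xi^n_{k-1}$ and its scaling-equivariance under the dilatation $\delta_{\sqrt{\dt_k}}$, shows that the restriction of $S_2(W^{\mathcal{D}_n})_{0,\cdot}$ to each $[t_{k-1},t_k]$ is precisely the geodesic from $\Xi^n_{k-1}$ to $\Xi^n_k$. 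This matches the interpolation scheme of \cite{bre/fri/hue09} verbatim.

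Second, I translate the order-$2$ cubature identity \eqref{eq:cubature-formula-signature} into the moment hypotheses of the rough path Donsker theorem. Reading \eqref{eq:cubature-formula-signature} at the level-$1$ component yields $E[W^i_1] = 0 = E[B^i_1]$. At the level-$2$ component, Stratonovich integration by parts gives, for both $B$ and the bounded-variation path $W$, the identity $\int_0^1 X^i\, dX^j + \int_0^1 X^j\, dX^i = X^i_1 X^j_1$, so that the symmetric part of the degree-$2$ tensor equals $\tfrac12 X^i_1 X^j_1$ while the antisymmetric part is the L\'evy area of $X$. Matching expectations with $X = B$ forces $E[W^i_1 W^j_1] = \delta_{ij}$ and a mean-zero L\'evy area for $W$. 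Combined with the standing bound $E[\norm{S_2(W)_{0,1}}^q] < \infty$ for every $q \ge 1$, these are exactly the centering, covariance, and integrability hypotheses required by \cite[Theorem 3]{bre/fri/hue09}, which then yields the claimed convergence for every $\alpha < 1/2$.

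No serious obstacle arises: the heavy analytic lifting has been carried out in \cite{bre/fri/hue09}, and the hypotheses of Proposition~\ref{prop:1} are crafted to match. The only subtle point is that the geodesic interpolation assumption is essential for this direct reduction; removing it (so that, for instance, the Ninomiya--Victoir formula of Example~\ref{ex:cubature-ninomiya-victoir} is covered) is precisely the content of the main theorem in Section~\ref{sec:main-result}, where general bounded-variation interpolations between random-walk vertices will have to be compared to the geodesic ones in rough path norm.
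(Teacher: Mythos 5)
Your proposal is correct and takes essentially the same route as the paper: Proposition~\ref{prop:1} is stated there as a direct rephrasing of \cite[Theorem 3]{bre/fri/hue09} with no separate proof, which is exactly the reduction you carry out. Your explicit checks --- that dilation-equivariance and left-invariance of the Carnot--Carath\'eodory distance make $S_2(W^{\mathcal{D}_n})_{0,\cdot}$ a geodesic interpolation of the random walk, and that the order-$2$ cubature condition forces $E[W_1^i]=0$, $E[W_1^iW_1^j]=\delta_{ij}$ and mean-zero L\'evy area --- merely spell out what the paper leaves implicit.
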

Recall that the Carnot-Caratheodory norm is defined by
\begin{equation*}
  \norm{\mathbf{x}} \coloneqq \inf \Set{\int_0^1 \abs{d\gamma} | \gamma \in
    C^{1-var}\left( [0,1], \R^d \right),\, S_2(\gamma)_{0,1} = \mathbf{x}}.
\end{equation*}
The infimum is actually always attained, and can be parametrized as
Lipschitz-continuous path with constant speed, i.e., $\norm{S_2(\gamma)_{0,t}}
= t \norm{S_2(\gamma)_{0,1}}$ for $0<t<1$ and a minimizing path $\gamma$,
see~\cite[Theorem 7.33]{fri/vic10}. As a homogeneous norm, the
Carnot-Caratheodory norm is equivalent to the simpler norm
\begin{equation*}
  \norm{\mathbf{x}}_2 \coloneqq \max\left( \abs{x}, \abs{a}^{1/2} \right),
  \quad \mathbf{x} = 1 + x + a \in G^2(\R^d),
\end{equation*}
see~\cite[Theorem 7.45]{fri/vic10}.

\begin{remark}
  \label{rem:2}
  In~\cite[Theorem~1]{bre/fri/hue09}, the moment condition is relaxed,
  which gives weak convergence in $\alpha$-H\"{o}lder norm for all $\alpha <
  \alpha^\ast$, for some $\alpha^\ast < 1/2$, which is related to the relaxed
  moment condition. In this paper, we shall always assume existence of all
  the moments. We note, however, that we could also relax this assumption,
  obtaining a similar result.
\end{remark}

\section{The main result}
\label{sec:main-result}

Usually, a cubature formula $W$ will not satisfy the conditions of
Proposition~\ref{prop:1}, even if we only choose uniform meshes, because the
corresponding interpolation $S\left( W^{\mathcal{D}_n} \right)_{0,\cdot}$ of
the random walk $\Xi^n$ will not be geodesic. Moreover, if we want to treat
functions $f$ which are not smooth, then we have to choose non-uniform meshes
with $t_k = \f{k^\gamma}{n^\gamma}$ for some $\gamma > m-1$,
see~\cite{kus04}. Therefore, we want to generalize Proposition~\ref{prop:1} in
two directions. We want to get rid of the condition of geodesic interpolation,
and we want to generalize to non-uniform meshes. Fortunately, the first
generalization is simple, at least for the cubature formulas actually
suggested in the literature, see
Example~\ref{ex:cubature-lyons-victoir},~\ref{ex:cubature-ninomiya-victoir}
and also for the Wong-Zakai approximation given in
Example~\ref{ex:wong-zakai-cubature}. The second generalization, however,
requires us to change the method of proof as compared to~\cite{bre/fri/hue09}.

It is natural to impose some restriction on the behavior of $S\left(
  W^{\mathcal{D}_n} \right)_{0,\cdot}$ between two nodes of the random
walk. Indeed, we have to rule out ``loops'' which approach infinity. 

\begin{assumption}
  \label{ass:interpolation}
  The cubature formula $W$ takes values in the Cameron-Martin space
  $\mathcal{H}$ (of paths started at $0$) and the Cameron-Martin norm has
  finite moments of all orders, i.e., for every $k \in \N$
  \begin{equation*}
    E\left[ \norm{W}_{\mathcal{H}}^k \right] = E\left[ \left( \int_0^1
        \abs{\dot{W}(s)}^2 ds \right)^{k/2} \right] < \infty.
  \end{equation*}
\end{assumption}

This assumption is both natural (a general continuous path of finite
$1$-variation would not be in the (1/2-$\epsilon$)-H\"older support of the
Wiener measure!) and satisfied by all (piecewise linear!) cubature formulas
used in practice. We look at this in some detail in
\begin{example}
  \label{ex:piecewise-linear-cubature}
  Assume that the cubature formula $W$ is piecewise linear, i.e., there is a
  positive integer $\ell$ and there are $d$-dimensional random variables $F_1,
  \ldots, F_\ell$ with finite moments of all orders and a mesh $0 = s_0 <
  \cdots < s_\ell = 1$ such that
   \begin{equation*}
     \dot{W}_s = F_l, \quad s_{l-1} < s \le s_l,\ 1 \le l \le \ell.
   \end{equation*}
   This immediately implies Assumption~\ref{ass:interpolation}.
\end{example}


Our main theorem is (for conclusions to cubature see
Corollary~\ref{cor:weak-convergence} below):
\begin{theorem}
  \label{thr:1}
  Given a cubature formula $W$ of order $m \ge 2$ such that $W_1$ and the
  corresponding area $A_1$ have finite moments of all orders and
  Assumption~\ref{ass:interpolation} is satisfied. Then Donsker's theorem
  holds in rough path topology for any sequence $\mathcal{D}_n$ of meshes with
  $\abs{\mathcal{D}_n} \to 0$, i.e.,
  \begin{equation*}
    S_2\left( W^{\mathcal{D}_n} \right)_{0,\cdot} \xrightarrow[n \to \infty]{}
    S_2( \fB )_{0,\cdot}
  \end{equation*}
  in $C^{0,\alpha-\text{H\"{o}l}}( [0,1], G^{2}(\R^d))$, for every $\alpha <
  1/2$.
\end{theorem}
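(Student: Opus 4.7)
The plan is to combine convergence of finite-dimensional distributions with tightness in $\alpha$-H\"older rough path topology; by standard Lyons-lift arguments (and as noted after~(\ref{eq:donsker-cubature})) it is enough to treat $m=2$.

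For the finite-dimensional distributions I would fix times $0\le s_1<\cdots<s_r\le 1$, let $k_i^{(n)}$ be the largest index with $t_{k_i^{(n)}}^{(n)}\le s_i$, and write $S_2(W^{\mathcal{D}_n})_{0,s_i} = \Xi^n_{k_i^{(n)}} \otimes (\text{small boundary})$. Passing to log-coordinates via the Baker--Campbell--Hausdorff formula, which truncates exactly at the bracket term on a step-$2$ nilpotent Lie group, the first level becomes a sum $\sum_k \sqrt{\Delta t_k}\,F_k$ of independent rescaled copies of $W_1$, with mean $0$ and covariance $I_d$ by the order-$2$ cubature matching~(\ref{eq:cubature-formula-signature}); Lindeberg--Feller delivers the limit Brownian motion. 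The second level consists of iterated self-integrals (whose means $\tfrac12 I_d$ also match by cubature) plus bracket cross-terms $[w_j,w_k]$; both sums converge to L\'evy's area along the triangular-array analogue of the argument in~\cite{bre/fri/hue09}.

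Tightness reduces, via a Kolmogorov-type criterion for rough paths (see~\cite{fri/vic10}), to proving that for every $q\ge 1$
\begin{equation*}
  \sup_n E\bigl[\norm{S_2(W^{\mathcal{D}_n})_{s,t}}^q\bigr] \le C_q\,\abs{t-s}^{q/2}, \qquad 0\le s\le t\le 1,
\end{equation*}
which yields $\alpha$-H\"older tightness for every $\alpha<1/2$ by choosing $q$ large. I would split the estimate into a \emph{microscopic} regime, where $s$ and $t$ lie in at most two neighbouring mesh intervals, and a \emph{macroscopic} one. In the microscopic case the path is a dilated copy of some $W_{(k)}$; Assumption~\ref{ass:interpolation} with Cauchy--Schwarz gives the pathwise bound $\abs{W^{\mathcal{D}_n}_t-W^{\mathcal{D}_n}_s}\le \norm{W_{(k)}}_{\mathcal{H}}\sqrt{t-s}$ and an analogous $\abs{t-s}$ bound for the area level, and the assumed moments on $\norm{W}_{\mathcal{H}}$ deliver the required $\abs{t-s}^{q/2}$. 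In the macroscopic case I would use Chen's identity to factor $S_2(W^{\mathcal{D}_n})_{s,t}$ into two microscopic boundary pieces and a central random-walk product $\xi^n_{j+1}\otimes\cdots\otimes\xi^n_{k}$; after subtracting from each $\xi^n_\ell$ its mean so that the residues form increments of a martingale on the step-$2$ Lie algebra, a discrete Burkholder--Davis--Gundy inequality applied separately at each of the two levels, together with the dilation $\delta_{\sqrt{\Delta t_\ell}}$, produces the sought $\abs{t-s}^{1/2}$ rate.

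The main obstacle is precisely this macroscopic estimate under non-uniform meshes: since the $\xi^n_k$ are not identically distributed, one has to extract the deterministic ``drift'' at each level in a way that is compatible with the Stratonovich correction of the limiting Brownian motion and then BDG only the genuinely martingale residue; the order-$2$ cubature identity~(\ref{eq:cubature-formula-signature}) is used decisively here. Finite moments of all orders of $W_1$, $A_1$, and $\norm{W}_{\mathcal{H}}$ allow the Kolmogorov criterion to be applied with arbitrarily large $q$, so H\"older tightness holds for every $\alpha<1/2$; combined with the finite-dimensional convergence, this proves the theorem.
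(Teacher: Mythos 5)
Your proposal follows essentially the same route as the paper: tightness via a Kolmogorov-type criterion, with the moment bound at grid points obtained by the Campbell--Baker--Hausdorff split of the second level into the $A^{i,j}$-sum (Jensen) and the cross-product martingale $\sum_{l_1<l_2}X^i_{l_1}X^j_{l_2}$ (discrete Burkholder), and the extension off the grid via the Cameron--Martin bound $\abs{\omega}_{1\text{-var};[s,t]}\le\sqrt{\abs{t-s}}\,\norm{\omega}_{\mathcal{H};[s,t]}$ supplied by Assumption~\ref{ass:interpolation}; this is exactly Proposition~\ref{prop:2} plus the boundary decomposition in the proofs of Theorems~\ref{thr:donsker-for-geodesic-rws} and~\ref{thr:1}. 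The only real divergence is that for the finite-dimensional marginals the paper simply invokes Pap's CLT on stratified Lie groups (Lemma~\ref{lem:clt-non-iid}) rather than running a Lindeberg--Feller/triangular-array argument level by level, and one minor imprecision in your sketch: after passing to log coordinates there is nothing to ``subtract'' -- centering and $E[A^{i,j}]=0$ (forced by order $m\ge 2$) already make both log-level processes martingales -- and the level-two quadratic variation is most cleanly controlled after the CBH split rather than by applying BDG to the whole second-level coordinate at once.
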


The natural conclusion from Theorem~\ref{thr:1} would be a weak convergence
result for the cubature-approximation of the SDE~\eqref{eq:sde} to its true
solution on path-space. A little care is necessary, however, 
because we have ignored the drift $V_0$ in the SDE, i.e., our driving signal
is a pure Brownian motion and does not include time. The classical approach is
to add another component to both the Brownian motion and the approximating
cubature paths by setting $B^0_t \coloneqq t$, $W^0_t \coloneqq t$ and then
require the moment matching condition~\eqref{eq:cubature-formula} to hold for
all iterated integrals, where the multi-index $(i_1,\ldots,i_k)$ now varies
over $\{0,1,\ldots,d\}^k$, i.e., where we also consider mixed iterated
integrals of Brownian motion and time $t$. Due to the scaling of Brownian
motion ``$dB_t \approx \sqrt{dt}$'', it is only necessary to impose the moment
matching condition for multi-indices $(i_1,\ldots,i_k)$ with $k +
\#\{j|i_j=0\} \le m$ to get weak convergence with rate $\f{m-1}{2}$.

However, the Ninomiya-Victoir scheme does not fall into this class, because we
have seen in Example~\ref{ex:cubature-ninomiya-victoir} that they do not
choose $W^0_t \equiv t$. Therefore, we want to generalize the above
considerations slightly. Let $h:[0,1] \to \R$ be a deterministic, uniformly
Lipschitz path with $h(0) = 0$ and $h(1) = 1$. This setting obviously includes
the drift-component of the Ninomiya-Victoir scheme. We define the path
$\Prefix^{h}{W}$ by $\Prefix^{h}{W}^i_t \coloneqq W^i_t$ for $i=1,\ldots,d$,
and $\Prefix^{h}{W}^0_t \coloneqq h(t)$. As usual, we set $B^0_t \coloneqq
t$. We assume the usual moment matching condition to hold, i.e.,
\begin{equation*}
  E\left[ S_{m}(\Prefix^{h}{W})_{0,1} \right] = E\left[ S_{m}(B)_{0,1}\right],
\end{equation*}
where $S_m(\Prefix^{h}{W})$ is the step-$m$ signature of the path
$\Prefix^{h}{W}$, more precisely
\begin{equation*}
  S_m\left( \Prefix^{h}{W} \right)_{0,1} = \sum_{k=0}^m \sum_{\substack{(i_1,
      \ldots, i_k) \in \{ 0,1, \ldots, d\}^k\\k+\#\{j\,:\,i_j = 0\} \le m}}
  \int_{0 \le t_1 \le \cdots \le t_k \le 1} d\Prefix^{h}{W}^{i_1}_{t_1} \cdots
  d\Prefix^{h}{W}^{i_k}_{t_k}\, e_{i_1} \otimes \cdots \otimes e_{i_k}. 
\end{equation*}
Analogously, the signature of the Brownian motion above is understood as the
signature of the now $\R^{d+1}$-valued process $B$. (This notation is
ambiguous. In the following, the symbol $B$ will usually denote the
$\R^d$-valued Brownian motion. We only mean the extended $\R^{d+1}$-valued
process if specifically indicated.) We note that the signatures of the
$(d+1)$-dimensional processes take their values in a stratified Lie group
denoted by $G^m_1(\R^d)$. In a similar fashion as above, we obtain -- by
rescaling and concatenation -- a stochastic process
$\Prefix^{h}{W}^{\mathcal{D}}$ along a grid $\mathcal{D}$. Of course, we have
to use a different rescaling for the component $\Prefix^{h}{W}^0$. Indeed,
following the construction in Section~\ref{sec:cubat-wien-space}, we define
$\delta_{\sqrt{\dt}}(\Prefix^{h}{W}):[0,\dt] \to \R^{d+1}$ by
$\delta_{\sqrt{\dt}}(\Prefix^{h}{W})^i_s = \sqrt{\dt}W^i_{s/\dt}$ for $i = 1,
\ldots, d$, as before, but $\delta_{\sqrt{\dt}}(\Prefix^{h}{W})^0_s = \dt\,
h(s/\dt)$. We continue to construct $\Prefix^{h}{W}^{\mathcal{D}}$ by
concatenation.

By rough path theory (continuity of Young pairing, e.g.~\cite[Section
9.4.4]{fri/vic10}) we also obtain weak convergence in path-space for the
extended process $\Prefix^{h}{W}$ to the extended, $\R^{d+1}$-valued Brownian
motion, which even holds for any truncated signature -- not only for the
step-$2$ signature.
\begin{corollary}
  \label{cor:donsker-with-drift}
  Let $W$ be cubature formula on Wiener space of order $m \ge 2$ satisfying
  Assumption~\ref{ass:interpolation} and such that $W_1$ and the corresponding
  area $A_1$ have finite moments of all orders. Then Donsker's theorem
  holds in rough path topology for any sequence $\mathcal{D}_n$ of meshes with
  $\abs{\mathcal{D}_n} \to 0$, i.e.,
  \begin{equation*}
    S_N\left( \Prefix^{h}{W}^{\mathcal{D}_n} \right)_{0,\cdot} \xrightarrow[n
    \to \infty]{} S_N( B )_{0,\cdot}
  \end{equation*}
  in $C^{0,\alpha-\text{H\"{o}l}}( [0,1], G^{N}_1(\R^d))$, for every $\alpha <
  1/2$ and any $N \ge 1$.
\end{corollary}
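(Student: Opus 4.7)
The plan is to deduce the corollary from Theorem~\ref{thr:1} by two standard continuity arguments from rough path theory: the Young pairing, to append the (essentially time-like) $0$-th component, and the Lyons extension, to lift from level $2$ to level $N$. The Lyons extension (cf.~\cite[Theorem~9.5]{fri/vic10}) is a continuous map from $\alpha$-H\"older weak geometric rough paths in $G^2_1(\R^d)$ to those in $G^N_1(\R^d)$, for every $\alpha \in (1/3, 1/2)$ and every $N \ge 2$; the continuous mapping theorem therefore reduces matters to the case $N = 2$ (the case $N=1$ being trivial by projection).

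To treat $N=2$ I would first control the drift component. Set $u_n(t) \coloneqq \Prefix^{h}{W}^{\mathcal{D}_n,0}_t$. By construction $u_n(t_k) = t_k$ at each mesh point, and on $[t_{k-1}, t_k]$ one has
\[
u_n(t) - t = \dt_k \left[ h\!\left(\tfrac{t - t_{k-1}}{\dt_k}\right) - \tfrac{t - t_{k-1}}{\dt_k} \right].
\]
The uniform Lipschitz continuity of $h$ makes $u_n$ uniformly Lipschitz on $[0,1]$ with constant $\norm{h}_{\mathrm{Lip}}$ and yields $\norm{u_n - \id}_\infty = O(\abs{\mathcal{D}_n})$. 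Interpolating the sup-norm bound against the uniform Lipschitz bound gives $\norm{u_n - \id}_{\theta-\text{H\"{o}l}} = O(\abs{\mathcal{D}_n}^{1-\theta}) \to 0$ for every $\theta \in (0,1)$.

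Now fix $\alpha \in (1/3, 1/2)$ and $\theta \in (1-\alpha, 1)$ so that $\alpha + \theta > 1$. Pairing a $G^2(\R^d)$-valued $\alpha$-H\"older rough path with a real-valued $\theta$-H\"older path to form a $G^2_1(\R^d)$-valued $\alpha$-H\"older rough path is a continuous operation (\cite[Section~9.4.4]{fri/vic10}). Applying it to $(S_2(W^{\mathcal{D}_n}), u_n)$ and invoking the continuous mapping theorem together with Theorem~\ref{thr:1} and the previous paragraph delivers
\[
S_2\!\left( \Prefix^{h}{W}^{\mathcal{D}_n} \right)_{0,\cdot} \xrightarrow[n \to \infty]{} S_2(B)_{0,\cdot}
\]
weakly in $C^{0,\alpha-\text{H\"{o}l}}([0,1], G^2_1(\R^d))$, with $B$ on the right denoting the $(d+1)$-dimensional extended Brownian motion. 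Combined with the Lyons-extension reduction, this proves the corollary.

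The delicate point is the interpolation estimate in the second paragraph: one needs $u_n \to \id$ in a topology with H\"older exponent exceeding $1-\alpha$, strong enough to couple with the level-$2$ rough limit. The uniform Lipschitz hypothesis on $h$ is precisely what grants a H\"older exponent arbitrarily close to $1$ and makes the Young pairing applicable; if $h$ were only continuous of bounded variation, one would have to revisit the proof of Theorem~\ref{thr:1} and incorporate the mixed iterated integrals involving $dh$ directly into the Donsker argument.
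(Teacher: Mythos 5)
Your proposal is correct and follows essentially the same route as the paper's own proof: show the drift component $\Prefix^{h}{W}^{\mathcal{D}_n,0}$ converges to $\id$ in $\beta$-H\"older norm for every $\beta<1$ via the sup-norm bound $O(\abs{\mathcal{D}_n})$ and the uniform Lipschitz bound, then combine Theorem~\ref{thr:1} with the continuity of the Young pairing to get the level-$2$ statement in $G^2_1(\R^d)$, and conclude by continuity of the Lyons lift. Your write-up is in fact slightly more explicit than the paper's (the condition $\alpha+\theta>1$ and the appeal to the continuous mapping theorem are only implicit there), but there is no substantive difference.
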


Moreover, we define $\Prefix^{h}{X}^{\mathcal{D}}$ as the solution to the
(random) ODE
\begin{equation}
  \label{eq:6}
  d\Prefix^{h}{X}^{\mathcal{D}}_t = V_0\left( \Prefix^{h}{X}^{\mathcal{D}}_t
  \right) dh^{\mathcal{D}_n}(t) + \sum_{i=1}^d V_i\left(
    \Prefix^{h}{X}^{\mathcal{D}}_t \right) dW^{\mathcal{D},i}_t,
\end{equation}
where $h^{\mathcal{D}_n} \coloneqq \Prefix^{h}{W}^{\mathcal{D}_n,0}$.
Then we have weak convergence of $\Prefix^{h}{X}^{\mathcal{D}_n}$ to $X$ on
path-space.

\begin{corollary}
  \label{cor:weak-convergence}
  Given a bounded, continuous functional $f: C^{0,\alpha-\text{H\"{o}l}}(
  [0,1], \R^N) \to \R$, and assume that $W$, $h$ and $\mathcal{D}_n$ satisfy
  the assumptions of Corollary~\ref{cor:donsker-with-drift}. Then we have
  \begin{equation*}
    E\left[ f\left( \Prefix^{h}{X}^{\mathcal{D}_n} \right) \right]
    \xrightarrow[n \to \infty]{} E[f(X)],
  \end{equation*}
  where $X$ denotes the path $(X_t)_{t \in [0,1]}$ of the true solution of the
  SDE~(\ref{eq:sde}) and $\Prefix^{h}{X}^{\mathcal{D}_n}$ denotes the pathwise
  solution of the ODE~\eqref{eq:6}.
\end{corollary}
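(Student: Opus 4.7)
The plan is to combine Corollary \ref{cor:donsker-with-drift} with the continuity of the Itô--Lyons map and the continuous mapping theorem. The starting point is to recognize that both $\Prefix{h}{X}^{\mathcal{D}_n}$ and $X$ can be realized as the first-level projection of a common object: the rough-path solution to the SDE driven by a $G^N_1(\R^d)$-valued path. Indeed, since $\Prefix{h}{W}^{\mathcal{D}_n}$ is of bounded variation, rough integration against its (canonical) lift $S_N(\Prefix{h}{W}^{\mathcal{D}_n})$ agrees with classical Riemann--Stieltjes integration, so $\Prefix{h}{X}^{\mathcal{D}_n}$ is exactly the rough solution driven by $S_N(\Prefix{h}{W}^{\mathcal{D}_n})$. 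For the limit object, the defining property of the Stratonovich solution $X$ to \eqref{eq:sde} is that it coincides (almost surely) with the rough-path solution driven by $\fB$, extended by the trivial lift of the time component; taking $N\ge 2$ this is $S_N(B)$ in the notation of the corollary.

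Next, I would invoke the universal limit theorem of Lyons (see e.g.~\cite[Theorem 10.26, 10.53]{fri/vic10}): for smooth, bounded vector fields $V_0, V_1, \ldots, V_d$ with sufficiently many bounded derivatives (which is the standing regularity assumption behind \eqref{eq:sde}), the solution map
\begin{equation*}
  \Phi: C^{0,\alpha-\text{H\"ol}}\bigl([0,1], G^{N}_1(\R^d)\bigr) \to
  C^{0,\alpha-\text{H\"ol}}\bigl([0,1], \R^N\bigr),
\end{equation*}
sending a driving rough path to its associated RDE solution started at $x_0$, is continuous, provided $\alpha > 1/3$ and $N \ge 2$. Consequently, for any bounded continuous $f$ on the target space, the composition $f \circ \Phi$ is itself bounded and continuous on the source space.

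With this in hand, the conclusion is immediate. By Corollary \ref{cor:donsker-with-drift}, $S_N(\Prefix{h}{W}^{\mathcal{D}_n})_{0,\cdot}$ converges weakly to $S_N(B)_{0,\cdot}$ in $C^{0,\alpha-\text{H\"ol}}([0,1], G^N_1(\R^d))$ for any fixed $\alpha \in (1/3, 1/2)$ and any $N \ge 2$. The continuous mapping theorem (equivalently, the Portmanteau theorem applied to the bounded continuous function $f\circ \Phi$) yields
\begin{equation*}
  E\bigl[ f(\Prefix{h}{X}^{\mathcal{D}_n}) \bigr]
  = E\bigl[ (f\circ \Phi)\bigl( S_N(\Prefix{h}{W}^{\mathcal{D}_n})_{0,\cdot} \bigr) \bigr]
  \xrightarrow[n\to\infty]{} E\bigl[ (f\circ \Phi)\bigl( S_N(B)_{0,\cdot} \bigr) \bigr]
  = E[f(X)].
\end{equation*}

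The only real point of care — and the step where one must be honest about assumptions — is the application of the universal limit theorem, which requires the vector fields $V_0, V_1, \ldots, V_d$ to have enough bounded derivatives (e.g.~$\mathrm{Lip}^{\gamma}$ for some $\gamma > 1/\alpha$) for the solution map to be continuous on $\alpha$-H\"older rough paths with values in the correct Lie group; under the smoothness assumption stated after \eqref{eq:sde} this is satisfied. Everything else is a straightforward reassembly of previously established facts: the Donsker-type weak limit at the level of signatures (Corollary \ref{cor:donsker-with-drift}), the identification of classical and rough-path solutions when the driver has bounded variation, and continuity of the Itô--Lyons map.
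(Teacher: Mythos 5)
Your proposal is correct and follows essentially the same route as the paper: interpret \eqref{eq:6} and the SDE as rough differential equations driven by the lifted paths, invoke continuity of the It\^o--Lyons solution map in $\alpha$-H\"older rough path topology (the paper also cites \cite[Theorem 10.26]{fri/vic10}), and push the weak convergence of Corollary~\ref{cor:donsker-with-drift} through this continuous map via the continuous mapping theorem. The only cosmetic difference is that you phrase it with $S_N$ for general $N\ge 2$ and spell out the identification of classical and rough-path solutions and the vector-field regularity, points the paper leaves implicit while working directly with $S_2$.
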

\begin{proof}
  We interpret~(\ref{eq:cubature-ODE}) as a rough differential equation, i.e.,
  for a given (rough) path $\fomega \in
  C^{0,\alpha-\text{H\"ol}}([0,1];G^2_1(\R^d))$ with $\omega = \pi_1(\fomega)$
  we define $\pi(\fomega)_t \coloneqq y_t$ by
  \begin{equation*}
    dy_t = V_0(y_t) d\omega^0_t + \sum_{i=1}^d V_i(y_t) d\omega^i_t.
  \end{equation*}
  In particular, we have $\Prefix^{h}{X}^{\mathcal{D}_n} = \pi\left(
    S_2(\Prefix^{h}{W}^{\mathcal{D}_n})_{0,\cdot}\right)$ and $X =
  \pi(S_2(B)_{0,\cdot})$. By~\cite[Theorem 10.26]{fri/vic10}, the map $\fomega
  \mapsto \pi(\fomega)_\cdot$ is a continuous map from
  $C^{0,\alpha-\text{H\"ol}}([0,1];G^2_1(\R^d))$ to
  $C^{0,\alpha-\text{H\"ol}}([0,1];\R^N)$. Thus,
  Corollary~\ref{cor:donsker-with-drift} implies weak convergence of
  $\Prefix^{h}{X}^{\mathcal{D}_n} = \pi\left(
    S_2(\Prefix^{h}{W}^{\mathcal{D}_n})_{0,\cdot} \right)$ to $X =
  \pi(S_2(B)_{0,\cdot})$ in $C^{0,\alpha-\text{H\"ol}}([0,1];\R^N)$.
\end{proof}
\begin{remark}
  Since the $\alpha$-H\"older topology is stronger than the usual uniform
  topology given by the supremum norm, Corollary~\ref{cor:weak-convergence} in
  particular holds for all bounded functionals $f$ which are continuous in the
  uniform topology on path space. In the case of unbounded continuous
  functionals, convergence can still be guaranteed provided that some uniform
  integrability property holds. (Of course, in the case of call-option type
  derivatives, one could also try a relevant put-call-parity.) Finally, in the
  case of barrier options, the payoff functional is often continuous apart
  from a set of measure zero on path space. Naturally, non-continuities on
  null-sets do not hinder weak convergence of the cubature method.
\end{remark}

\section{Random walks with independent, non-identically distributed increments}
\label{sec:random-walks-with}

In this section, we prepare the main ingredients of a proof of Donsker's
theorem for random walks with independent, but not identically distributed
increments on the Lie group $G \coloneqq G^2(\R^d)$. More precisely, let $\xi$
be a random variable with values in $G$ with finite moments of all orders. We
shall denote the components of $\xi$ in the basis of $\G \coloneqq \G^2(\R^d)$
given by $e_i$, $1\le i \le d$, together with $[e_i,e_j]$, $1 \le i < j \le
d$, by $X^i$ and $A^{i,j}$, respectively, i.e.,
\begin{equation*}
  \xi = \exp\left( \sum_{i=1}^d X^i e_i + \sum_{i<j} A^{i,j} [e_i,e_j] \right).
\end{equation*}
Thus, the condition that $\xi$ has finite moments of all orders simply means
that all the real random variables $X^i$, $A^{i,j}$ have finite moments of all
orders $q \ge 1$. Moreover, we assume that $\xi$ is \emph{centered}, i.e.,
\begin{equation*}
  E[X^i] = 0, \quad i= 1, \ldots, d.
\end{equation*}

Let us fix a mesh $\mathcal{D}_n = \{0 = t_0 < \cdots < t_n = 1\}$. For $n$
independent copies $\xi_{(1)}, \ldots, \xi_{(n)}$ of $\xi$, define $\xi^n_k =
\delta_{\sqrt{\dt_k}}( \xi_{(k)} )$ and the corresponding random walk
\begin{equation*}
  \Xi^n_0 = 1, \quad \Xi^n_{k} = \Xi^n_{k-1} \otimes \xi^n_k, \ k = 1, \ldots, n.
\end{equation*}
For use in the next lemma, let us define the coordinate mappings $x^i$
(mapping $x \in G$ to the component of $\log(x)$ with respect to the basis
element $e_i$) and $x^{i,j}$ (mapping $x \in G$ to the component of $\log(x)$
with respect to the basis element $[e_i,e_j]$), $1 \le i \le d$, $i<j\le
d$. As usual, the corresponding vector-fields (i.e., basis of the tangent
space) are denoted by $\f{\pa}{\pa x^i}$ and $\f{\pa}{\pa x^{i,j}}$,
respectively.
\begin{lemma}
  \label{lem:clt-non-iid}
  The above random walk satisfies the central limit theorem, i.e., $\Xi^n_n$
  converges weakly to the Gaussian measure with infinitesimal generator
  \begin{equation*}
    \sum_{i<j} a^{i,j} \f{\pa}{\pa x^{i,j}} + \f{1}{2} \sum_{i \le j} b^{i,j}
    \f{\pa}{\pa x^i} \f{\pa}{\pa x^j},
  \end{equation*}
  where $a^{i,j} \coloneqq E[A^{i,j}]$ and $b^{i,j} \coloneqq
  \operatorname{Cov}(X^i, X^j)$.\footnote{The statement means that there is a
    semi-group $(\mu_t)_{t\ge0}$ of probability measures on $G$ having the
    above infinitesimal generator and $\mu_1$ is the limiting distribution of
    $\Xi^n_n$. Moreover, this semi-group is Gaussian in the sense that
    $\lim_{t\searrow 0} \f{1}{t} \mu_t(G\setminus U) = 0$ for every
    neighborhood $U$ of the neutral element of the group $G$.}
\end{lemma}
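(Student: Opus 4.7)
My plan is to pass to exponential coordinates on $G = G^2(\R^d)$, where the step-$2$ nilpotency $[\G,[\G,\G]] = 0$ makes the Baker-Campbell-Hausdorff formula fully explicit. Writing $\log \xi^n_k = U_k + V_k$ with
\[
U_k := \sqrt{\dt_k}\sum_i X^i_{(k)}\, e_i \in \R^d, \qquad V_k := \dt_k \sum_{i<j} A^{i,j}_{(k)}\,[e_i,e_j] \in [\G,\G],
\]
and decomposing $\log\Xi^n_k = P^n_k + Q^n_k$ by stratum, the identity $\exp(X)\otimes\exp(Y) = \exp\bigl(X+Y+\tfrac12[X,Y]\bigr)$ --- in which only brackets of two first-stratum elements survive --- gives $P^n_{k+1}=P^n_k+U_{k+1}$ and $Q^n_{k+1} = Q^n_k + V_{k+1} + \tfrac12[P^n_k,U_{k+1}]$, so that
\[
P^n_n = \sum_{k=1}^n U_k, \qquad Q^n_n = \sum_{k=1}^n V_k \;+\; \tfrac12 \sum_{k=1}^n [P^n_{k-1}, U_k].
\]

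I would then treat the three sums separately. First, $P^n_n$ is a triangular array of independent centered $\R^d$-valued summands with total covariance $\sum_k \dt_k \operatorname{Cov}(X) = b$; since $X$ has all moments, Lyapunov's condition is immediate and the multivariate Lindeberg-Feller CLT yields $P^n_n \Rightarrow \mathcal{N}(0,b)$, which I identify with $\mathbb{B}_1$ for an auxiliary Brownian motion $\mathbb{B}$ of covariance $b$. Second, $\sum_k V_k$ has coordinates $\sum_k \dt_k A^{i,j}_{(k)}$ with mean $a^{i,j}$ (using $\sum_k \dt_k = 1$) and variance $\le \abs{\mathcal{D}_n}\operatorname{Var}(A^{i,j})\to 0$, hence converges in $L^2$ to $a := \sum_{i<j} a^{i,j}[e_i,e_j]$. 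Third, $S^n := \tfrac12 \sum_k [P^n_{k-1}, U_k]$ is a square-integrable martingale --- the discrete L\'evy area of $P^n$.

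The core step is the joint CLT $(P^n_n, S^n)\Rightarrow (\mathbb{B}_1, \tfrac12 \operatorname{Area}(\mathbb{B})_{0,1})$, established via a multivariate martingale CLT for the $\R^d\oplus\R^{d(d-1)/2}$-valued array of increments $\bigl(U_k, \tfrac12[P^n_{k-1}, U_k]\bigr)$: the predictable quadratic variation is a Riemann sum whose limit matches the covariance of $(\mathbb{B}_1, \tfrac12\operatorname{Area}(\mathbb{B})_{0,1})$ (the area entries producing integrals of the form $\tfrac14 \int_0^1 E[(\mathbb{B}^i_s)^2]\, b^{j,j}\, ds$ and the expected cross-terms), while the conditional Lindeberg bound follows from uniform $L^{2+\epsilon}$ control of $\abs{P^n_{k-1}}$ via Burkholder-Davis-Gundy applied to the $\R^d$-valued martingale $P^n$, combined with the $L^q$-moment hypotheses on $X,A$. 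Continuity of $\exp$ then gives $\Xi^n_n \Rightarrow \mu_1 := \operatorname{Law}\exp\bigl(\mathbb{B}_1 + a + \tfrac12 \operatorname{Area}(\mathbb{B})_{0,1}\bigr)$; extending the construction to $\mu_t := \operatorname{Law}\exp\bigl(\mathbb{B}_t + ta + \tfrac12 \operatorname{Area}(\mathbb{B})_{0,t}\bigr)$ and Taylor-expanding $f\circ\exp$ at the identity (using $E[\mathbb{B}^i_t] = E[\operatorname{Area}(\mathbb{B})^{i,j}_{0,t}] = 0$ and $E[\mathbb{B}^i_t\mathbb{B}^j_t] = tb^{i,j}$) identifies the generator of $(\mu_t)$ as the one stated.

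The principal obstacle is the identification of the predictable bracket of $S^n$: this reduces to convergence of Riemann-type sums $\sum_k \dt_k\, \phi(P^n_{k-1}) \to \int_0^1 E\phi(\mathbb{B}_s)\,ds$ for quadratic $\phi$, which follows from classical Donsker in the uniform topology plus the uniform integrability delivered by BDG. The non-iid structure of the increments (inherited from the non-uniform mesh $\mathcal{D}_n$) causes no additional difficulty, since both the Lindeberg-Feller and the martingale CLT apply directly to triangular arrays.
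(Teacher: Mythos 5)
The paper proves this lemma in a single paragraph by appealing to a ready-made central limit theorem on stratified Lie groups, namely~\cite[Theorem~3.2]{pap93}, and verifying its five hypotheses (a uniform second-moment bound, centeredness, convergence of the first- and second-order cumulant sums, and a Lindeberg-type condition), all of which reduce to elementary scaling and Cauchy--Schwarz arguments. Your route is genuinely different: you pass to exponential coordinates via BCH and try to establish the limit by hand. The BCH decomposition and the treatment of the first two terms ($P^n_n$ by Lindeberg--Feller, $\sum_k V_k$ by an $L^2$-estimate) are correct and match the structure one would expect.

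The gap is in the core step, the joint convergence $(P^n_n,S^n)\Rightarrow(\mathbb{B}_1,\tfrac12\operatorname{Area}(\mathbb{B})_{0,1})$, which you try to obtain from a martingale CLT for the array $\bigl(U_k,\tfrac12[P^n_{k-1},U_k]\bigr)$. The standard multivariate martingale CLT requires the predictable quadratic variation to converge \emph{in probability to a deterministic limit}, and then delivers a \emph{Gaussian} limit. Here the predictable bracket of the area component is of the form $\sum_k \dt_k\,\phi(P^n_{k-1})$ with $\phi$ quadratic, and this converges in distribution to $\int_0^1\phi(\mathbb{B}_s)\,ds$, a genuinely \emph{random} variable -- not, as you assert in the final paragraph, to $\int_0^1 E\phi(\mathbb{B}_s)\,ds$. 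There is no law of large numbers available: the summands $\phi(P^n_{k-1})$ are strongly correlated because each $P^n_{k-1}$ is the running sum up to step $k-1$. Consistently with this, the target pair is \emph{not} jointly Gaussian in the Euclidean sense -- L\'evy's area has a $1/\cosh$-type characteristic function -- so a theorem that would return a Gaussian limit from a deterministic covariance \emph{cannot} be the right tool. (The ``Gaussian'' in the lemma refers to the Lie-group notion spelled out in the footnote, i.e.\ a continuous convolution semigroup with infinitesimally negligible jumps, not to a linear Gaussian law in coordinates.) To make your route rigorous one would need a stable/mixing martingale CLT in which the bracket is allowed a random limit, together with an identification of the resulting mixed-normal law with the law of $(\mathbb{B}_1,\tfrac12\operatorname{Area}(\mathbb{B})_{0,1})$, or alternatively a Kurtz--Protter-type theorem on convergence of stochastic integrals against converging martingales. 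Both are substantially more work than the verification of Pap's conditions, which is why the paper takes the latter route.
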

\begin{proof}
  The result is well-known in probability theory on Lie groups, see,
  e.g.,~\cite{pap93}. We verify that the system of probability measures
  $\mu_{n,k} = (\xi^n_k)_\ast P$, i.e., $\mu_{n,k}$ is the law of $\xi^n_k$,
  satisfies the conditions given in~\cite[Theorem 3.2]{pap93}, namely:
  \begin{enumerate}
  \item[(i)] $\sup_n \sum_{k=1}^n \int_G \norm{x}^2 \mu_{n,k}(dx) < \infty$;
  \item[(ii)] $\mu_{n,k}$ is centered in the above sense;
  \item[(iii)] for every $1 \le i < j \le d$, there is a number $a^{i,j} \in
    \R$ such that 
    \begin{equation*}
      a^{i,j} = \lim_{n\to\infty} \sum_{k=1}^n \int_G x^{i,j}(x)
      \mu_{n,k}(dx);
    \end{equation*}
  \item[(iv)] for every $1 \le i,j \le d$, there is a number $b^{i,j} \in \R$
    such that
    \begin{equation*}
      b^{i,j} = \lim_{n\to\infty} \sum_{k=1}^n \int_G x^{i}(x) x^j(x)
      \mu_{n,k}(dx);
    \end{equation*}
  \item[(v)] $\lim_{n\to\infty} \sum_{k=1}^n \int_{\norm{x}\ge \epsilon}
    \norm{x}^2 \mu_{n,k}(dx) = 0$ for all $\epsilon > 0$.
  \end{enumerate}
  
  By homogeneity of the Carnot-Caratheodory norm, we have
  \begin{equation*}
    \int_G \norm{x}^2 \mu_{n,k}(dx) = E\left[
      \norm{\delta_{\sqrt{\dt_k}}(\xi)}^2 \right] = \dt_k E\left[ \norm{\xi}^2
    \right]. 
  \end{equation*}
  Thus,
  \begin{equation*}
    \sum_{k=1}^n \int_G \norm{x}^2 \mu_{n,k}(dx) = \sum_{k=1}^n \dt_k E\left[
      \norm{\xi}^2 \right] =  E\left[ \norm{\xi}^2 \right],
  \end{equation*}
  and the supremum over $n$ is obviously finite, settling (i).

  (ii) is satisfied by assumption on $\xi$. Regarding (iii), note that
  $x^{i,j}(\xi^n_k) = x^{i,j}(\delta_{\sqrt{\dt_k}}(\xi)) = \dt_k A^{i,j}$,
  where equality is understood as equality in law. Therefore, (iii) is
  satisfied with $a^{i,j} = E(A^{i,j}) < \infty$. A similar argument shows
  that (iv) holds with $b^{i,j} = \operatorname{Cov}(X^i, X^j)$.

  For the proof of (v), we again use homogeneity of the Carnot-Caratheodory
  norm. Indeed, we have
  \begin{equation*}
    \int_{\norm{x} \ge \epsilon} \norm{x}^2 \mu_{n,k}(dx) = \dt_k E\left[
      \mathbf{1}_{]\epsilon, \infty[}(\sqrt{\dt_k} \norm{\xi}) \norm{\xi}^2
    \right], 
  \end{equation*}
  implying that
  \begin{equation*}
    \sum_{k=1}^n \int_{\norm{x}\ge \epsilon} \norm{x}^2 \mu_{n,k}(dx) \le
    E\left[ \mathbf{1}_{\left] \f{\epsilon}{\sqrt{\abs{\mathcal{D}_n}}},
          \infty \right[} \norm{\xi}^2 \right] \le \sqrt{P\left( \norm{\xi} >
        \f{\epsilon}{\sqrt{\abs{\mathcal{D}_n}}}\right) } \cdot \sqrt{E\left[
        \norm{\xi}^4 \right]},
  \end{equation*}
  by the Cauchy-Schwarz inequality. Now, the right hand side converges to zero
  by integrability of $\norm{\xi}$ and $\abs{\mathcal{D}_n} \to 0$, for every
  fixed $\epsilon > 0$.
\end{proof}
\begin{remark}
  If $\xi$ is the step-$2$ signature of a cubature formula of degree $m \ge
  2$, then $a^{i,j} = E[A^{i,j}] = 0$, $1 \le i < j \le d$, and, moreover,
  $b^{i,j} = \operatorname{Cov}(X^i,X^j) = \delta_{ij}$. Thus, the
  generator of the limiting Gaussian measure in Lemma~\ref{lem:clt-non-iid}
  coincides with the generator of the Brownian motion on $G$, i.e., with the
  generator of $\fB$.
\end{remark}

Next we state a moment estimate, which will enable us to prove tightness of
the family of interpolated random walks in rough path topology.
\begin{proposition}
  \label{prop:2}
  For every $p \in \N$, $p \ge 1$ we can find a constant $C$ independent of
  $k$ and $n$ such that
  \begin{equation*}
    E\left[ \norm{\Xi^n_k}^{4p} \right] \le C t_k^{2p}.
  \end{equation*}
\end{proposition}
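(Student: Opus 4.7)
The plan is to exploit the equivalence of the Carnot--Carath\'eodory norm with the simpler homogeneous norm $\norm{\mathbf{x}}_2 = \max(\abs{x}, \abs{a}^{1/2})$ recalled just after Proposition~\ref{prop:1}. Writing $\Xi^n_k = \exp(Y_k + B_k)$ with $Y_k \in \R^d$ and $B_k$ in the bracket component of $\G^2(\R^d)$, the claim reduces to establishing
\begin{equation*}
  E\left[\abs{Y_k}^{4p}\right] \lesssim t_k^{2p}
  \quad \text{and} \quad
  E\left[\abs{B_k}^{2p}\right] \lesssim t_k^{2p}.
\end{equation*}

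First I would derive explicit recursions for $(Y_k, B_k)$. Since $G^2(\R^d)$ is step-$2$ nilpotent, the Baker--Campbell--Hausdorff formula collapses to $\exp(a)\exp(b) = \exp\bigl(a+b+\f{1}{2}[a,b]\bigr)$. Writing $\xi = \exp(X+A)$ with $X = \sum_i X^i e_i$ and $A = \sum_{i<j} A^{i,j}[e_i,e_j]$, and observing that the bracket component is central, the dilation formula yields $\xi^n_j = \exp\bigl(\sqrt{\dt_j}\,X_{(j)} + \dt_j\, A_{(j)}\bigr)$, from which an easy induction gives
\begin{align*}
  Y_k &= \sum_{j=1}^k \sqrt{\dt_j}\, X_{(j)}, \\
  B_k &= \sum_{j=1}^k \dt_j\, A_{(j)} \;+\; \f{1}{2}\sum_{j=1}^k \sqrt{\dt_j}\,[Y_{j-1}, X_{(j)}].
\end{align*}

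The bound on $Y_k$ is immediate: Marcinkiewicz--Zygmund applied coordinate-wise, followed by Jensen's inequality (using the probability weights $\dt_j/t_k$ and convexity of $x\mapsto x^{2p}$), yields
\begin{equation*}
  E\left[\abs{Y_k}^{4p}\right] \lesssim E\left[\bigl(\textstyle\sum_{j=1}^k \dt_j \abs{X_{(j)}}^2\bigr)^{2p}\right] \le t_k^{2p-1}\sum_{j=1}^k \dt_j\,E\left[\abs{X}^{4p}\right] \lesssim t_k^{2p}.
\end{equation*}
For $B_k$ I split the two sums. The first one decomposes into its deterministic mean $t_k\,E[A]$, contributing $O(t_k^{2p})$, and a centered piece of independent terms; Marcinkiewicz--Zygmund combined with the elementary bound $\sum_{j=1}^k \dt_j^2 \le (\max_{j\le k}\dt_j)\,t_k \le t_k^2$ controls the latter by $O(t_k^{2p})$. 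For the bracket sum, the key observation is that, coordinate-wise, $\sqrt{\dt_j}[Y_{j-1},X_{(j)}]$ is a scalar martingale difference with respect to $\F_{j-1}\coloneqq \sigma(\xi_{(1)},\dots,\xi_{(j-1)})$, because $X$ is centered and $X_{(j)}$ is independent of $\F_{j-1}$. Burkholder--Davis--Gundy, Jensen, and the already-established moment bound $E[\abs{Y_{j-1}}^{2p}] \lesssim t_{j-1}^p$ then give
\begin{equation*}
  E\left[\bigl|\textstyle\sum_{j=1}^k \sqrt{\dt_j}[Y_{j-1},X_{(j)}]\bigr|^{2p}\right] \lesssim t_k^{p-1}\sum_{j=1}^k \dt_j\, t_{j-1}^p \lesssim t_k^{2p}.
\end{equation*}

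The main obstacle is precisely this bracket term, where the non-commutative Lie-group structure interacts with non-uniform meshes; the resolution hinges on recognising that the centering of $X$ makes the bracket sum a martingale, so that BDG reduces it to a quadratic variation, which is then closed recursively by the moment bound on $Y_{j-1}$. Once this step is understood, everything else reduces to standard sums of independent random variables.
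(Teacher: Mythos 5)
Your proof is correct and takes essentially the same route as the paper: reduce to the two levels via the equivalent homogeneous norm and the Campbell--Baker--Hausdorff formula, bound the first level by Burkholder/Marcinkiewicz--Zygmund plus Jensen with the weights $\dt_j/t_k$, and bound the second level by splitting off the area terms and treating the antisymmetric cross-sums as a martingale, closing with BDG, Jensen and the level-one moment estimate. The only cosmetic differences are that you package the cross-terms as the single bracket martingale $\sum_j \sqrt{\dt_j}\,[Y_{j-1},X_{(j)}]$ and reuse the already-proved bound on $Y_{j-1}$ (together with independence of $X_{(j)}$ from $Y_{j-1}$) where the paper re-derives the inner-sum estimate via Burkholder and Cauchy--Schwarz, and that you center the area sum where a direct application of Jensen's inequality suffices.
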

\begin{proof}
  The proof heavily relies on \emph{Burkholder's inequality},
  see~\cite{burk73}. Recall that the discrete time Burkholder inequality
  establishes the existence of constants $c_p, C_p$ for $1 < p < \infty$ such
  that for every $p$-integrable real martingale $Y_n$ and any $n\in\N$ we have
  \begin{equation*}
    c_p \sup_n \norm{S_n}_{L^p} \le \sup_n \norm{Y_n} \le C_p \sup_n
    \norm{S_n}_{L^p}
  \end{equation*}
  where, setting $Y_0 \coloneqq 0$, $S_n \coloneqq \sqrt{\sum_{k=1}^n (Y_k -
    Y_{k-1})^2}$ is the square root of the quadratic variation of $Y$. By
  choosing $Y_{n+l} \equiv Y_n$ for $l > 0$, this immediately implies the
  corresponding finite version
  \begin{equation}
    \label{eq:burkholder}
    c_p \norm{S_n}_{L^p} \le \norm{Y_n}_{L^p} \le C_p \norm{S_n}_{L^p}.
  \end{equation}
  By equivalence of homogeneous norms, see, for
  instance,~\cite[Theorem~7.44]{fri/vic10}, we can replace the
  Carnot-Caratheodory norm $\norm{\cdot}$ on $G^2(\R^d)$ by the homogeneous
  norm
  \begin{equation*}
    \norm{\mathbf{x}}_2 \coloneqq \max\left( \abs{\pi_1(\log(\mathbf{x}))},
      \sqrt{\abs{\pi_2(\log(\mathbf{x}))}} \right) \le
    \abs{\pi_1(\log(\mathbf{x}))} + \sqrt{\abs{\pi_2(\log(\mathbf{x}))}},
    \quad \mathbf{x} \in G^2(\R^d), 
  \end{equation*}
  where $\pi_1$ and $\pi_2$ denote the projection to the first and second
  level components of $\mathbf{x}$, i.e., when $\mathbf{x} = 1 + x + a \in
  G^2(\R^d)$, then $\pi_1(\mathbf{x}) = x \in \R^d$ and $\pi_2(\mathbf{x}) = a
  \in \R^d \otimes \R^d$. Thus, the assertion of the proposition is
  equivalent to the existence of a constant $C$ (only depending on $p$) such
  that
  \begin{align}
    \label{eq:1}
    E\left[ \abs{\pi_1(\log(\Xi^n_k))}^{4p} \right] &\le C t_k^{2p},\\
    \label{eq:2}
    E\left[ \abs{\pi_2(\log(\Xi^n_k))}^{2p} \right] &\le C t_k^{2p}.
  \end{align}

  We start by proving~(\ref{eq:1}). By the Campbell-Baker-Hausdorff formula,
  we have
  \begin{equation*}
    \abs{\pi_1(\log(\Xi^n_k))} = \abs{\sum_{i=1}^d \left(\sum_{l=1}^k X^i_l
      \right)e_i} \le C \sum_{i=1}^d \abs{\sum_{l=1}^k X^i_l}.
  \end{equation*}
  Here, $X^i_l = \sqrt{\dt_l} X^i_{(l)}$ and $C$ is a constant, which does
  neither depend on the partition $\mathcal{D}_n$ nor on $k$. For the
  remainder of the proof, we will use this symbol for constants that may vary
  from line to line, but do not depend on $\mathcal{D}_n$ or on $k$. This
  implies that
  \begin{equation*}
    E\left[ \abs{\pi_1(\log(\Xi^n_k))}^{4p} \right] \le C \sum_{i=1}^d E\left[
      \abs{\sum_{l=1}^k X^i_l}^{4p} \right].
  \end{equation*}
  Now we apply Burkholder's inequality to the martingale $Y_k = \sum_{l=1}^k
  X^i_l$ with the exponent $4p$ to get
  \begin{equation*}
    E\left[ \abs{\pi_1(\log(\Xi^n_k))}^{4p} \right] \le C \sum_{i=1}^d E\left[
      \abs{\sum_{l=1}^k (X^i_l)^2}^{2p} \right] = C t_k^{2p} \sum_{i=1}^d
    E\left[ \abs{ \sum_{l=1}^k \f{\dt_l}{t_k} (X^i_{(l)})^2}^{2p} \right].
  \end{equation*}
  Noting that the sum inside the expectation is a convex combination, we apply
  Jensen's inequality for the convex function $x^{2p}$ and get
  \begin{equation*}
    E\left[ \abs{\pi_1(\log(\Xi^n_k))}^{4p} \right] \le C t_k^{2p}
    \sum_{i=1}^d \sum_{l=1}^k \f{\dt_l}{t_k} E\left[(X^i_{(l)})^{4p} \right] =
    C \left( \sum_{i=1}^d E\left[(X^i_{(l)})^{4p} \right] \right) t_k^{2p},
  \end{equation*}
  which is of the form required in~(\ref{eq:1}).

  For~(\ref{eq:2}), we again start with the Campbell-Baker-Hausdorff formula
  and get
  \begin{align}
    E\left[ \abs{\pi_2(\log(\Xi^n_k))}^{2p} \right] &= E\left[ \abs{
          \sum_{1 \le i < j \le d} \left[ \sum_{l=1}^k A^{i,j}_l + \f{1}{2}
            \sum_{1 \le l_1 < l_2 \le k} \left( X^i_{l_1} X^j_{l_2} -
              X^j_{l_1} X^i_{l_2} \right) \right] [e_i,e_j]}^{2p}
      \right]\nonumber \\
      &\le C \sum_{1\le i<j\le d} E\left[ \abs{ \sum_{l=1}^k A^{i,j}_l + \f{1}{2}
          \sum_{1 \le l_1 < l_2 \le k} \left( X^i_{l_1} X^j_{l_2} -
            X^j_{l_1} X^i_{l_2} \right) }^{2p} \right] \nonumber \\
      &\le C \sum_{1\le i<j\le d} \left( E\left[ \abs{\sum_{l=1}^k
            A^{i,j}_l}^{2p}\right]  + E\left[ \abs{\sum_{1 \le l_1 < l_2 \le k}
            X^i_{l_1} X^j_{l_2}}^{2p} \right] + E\left[ \abs{\sum_{1 \le l_1 <
              l_2 \le k} X^j_{l_1} X^i_{l_2}}^{2p} \right]\right).\label{eq:5}
  \end{align}
  Now fix some $1 \le i < j \le d$. Again by Jensen's inequality, we have
  \begin{equation}
    \label{eq:3}
    E\left[ \abs{\sum_{l=1}^k A^{i,j}_l}^{2p}\right] = t_k^{2p} E\left[
      \abs{\sum_{l=1}^k \f{\dt_l}{t_k} A^{i,j}_{(l)}}^{2p}\right] \le t_k^{2p}
    \sum_{l=1}^k \f{\dt_l}{t_k} E\left[ \abs{A^{i,j}_{(l)}}^{2p} \right] =
    E\left[ \abs{A^{i,j}_{(1)}}^{2p} \right] t_k^{2p}.
  \end{equation}
  On the other hand, note that
  \begin{equation*}
    \sum_{1 \le l_1 < l_2 \le k} X^i_{l_1} X^j_{l_2} = \sum_{l_2 = 1}^k
    X^j_{l_2} \left( \sum_{l_1 = 1}^{l_2 - 1} X^i_{l_1} \right)
  \end{equation*}
  is a martingale (indexed by $k$). Thus, Burkholder's
  inequality~(\ref{eq:burkholder}) for the exponent $2p$ gives
  \begin{equation*}
    E\left[ \abs{\sum_{1 \le l_1 < l_2 \le k} X^i_{l_1} X^j_{l_2}}^{2p}
    \right] \le C E\left[ \abs{ \sum_{l_2 = 1}^k (X^j_{l_2})^2 \left(
          \sum_{l_1=1}^{l_2-1} X^i_{l_1} \right)^2 }^p \right] = C t_k^p
    E\left[ \abs{ \sum_{l_2 = 1}^k \f{\dt_{l_2}}{t_k} (X^j_{(l_2)})^2 \left(
          \sum_{l_1=1}^{l_2-1} X^i_{l_1} \right)^2 }^p \right].
  \end{equation*}
  Now we again apply Jensen's inequality and then the Cauchy-Schwarz
  inequality, and obtain
  \begin{align}
    E\left[ \abs{\sum_{1 \le l_1 < l_2 \le k} X^i_{l_1} X^j_{l_2}}^{2p} 
    \right] &\le C t_k^p \sum_{l_2 = 1}^k \f{\dt_{l_2}}{t_k} E\left[\abs{
        (X^j_{(l_2)})^2 \left(\sum_{l_1=1}^{l_2-1} X^i_{l_1} \right)^2}^p
    \right]\nonumber \\
    &\le C t_k^p \sum_{l_2 = 1}^k \f{\dt_{l_2}}{t_k} \left(
      E\left[\abs{X^j_{(l_2)}}^{4p} \right] \right)^{1/2} \left( E\left[
        \abs{\sum_{l_1=1}^{l_2-1} X^i_{l_1}}^{4p} \right]
    \right)^{1/2}.\label{eq:4} 
  \end{align}
  By applying Burkholder's and Jensen's inequalities for a final time, we get
  for the left-most term in the above inequality
  \begin{multline*}
    E\left[ \abs{\sum_{l_1=1}^{l_2-1} X^i_{l_1}}^{4p} \right] \le C
    t_{l_2-1}^{2p} E\left[ \abs{\sum_{l_1=1}^{l_2-1} \f{\dt_{l_1}}{t_{l_2-1}}
        (X^i_{(l_1)})^2}^{2p} \right] \\ \le C t_{l_2-1}^{2p}
    \sum_{l_1=1}^{l_2-1} \f{\dt_{l_1}}{t_{l_2-1}} E\left[
      \abs{X^i_{(l_1)}}^{4p} \right] \le C t_{l_2-1}^{2p} E\left[
      \abs{X^i_{(1)}}^{4p} \right].
  \end{multline*}
  Inserting the last inequality into~(\ref{eq:4}), we obtain
  \begin{align*}
    E\left[ \abs{\sum_{1 \le l_1 < l_2 \le k} X^i_{l_1} X^j_{l_2}}^{2p} 
    \right] &\le C t_k^p \sum_{l_2=1}^k \f{\dt_{l_2}}{t_k} t_{l_2-1}^p \left(
      E\left[\abs{X^j_{(1)}}^{4p} \right] \right)^{1/2} \left(
      E\left[\abs{X^i_{(1)}}^{4p} \right] \right)^{1/2} \\
    &\le C \left(
      E\left[\abs{X^j_{(1)}}^{4p} \right] \right)^{1/2} \left(
      E\left[\abs{X^i_{(1)}}^{4p} \right] \right)^{1/2} t_k^{2p}.
  \end{align*}
  Together with~(\ref{eq:5}) and~(\ref{eq:3}) this shows~(\ref{eq:2}),
  and the proposition follows.
\end{proof}


\section{Proof of the main results}
\label{sec:proof-main-result}

Analogously to~\cite[Theorem 1]{bre/fri/hue09} we can now state our
\begin{theorem}
  \label{thr:donsker-for-geodesic-rws}
  Let $\mathcal{D}_n = \{0 = t_0 < \cdots < t_n = 1\}$ be a sequence of meshes
  with $\abs{\mathcal{D}_n} \to 0$ and let $\Xi^n =
  \left(\Xi^n_k\right)_{k=0}^n$ be a centered random walk in $G^2(\R^d)$ along
  the mesh (as defined in Section~\ref{sec:random-walks-with}), whose
  increments have moments of all orders. Additionally, we impose
  $E(\pi_2(\xi^n_k)) = 0$.  Define a sequence $\barXi^n$ of stochastic
  processes with values in $G^2(\R^d)$ by $\barXi^n_{t_k} = \Xi^n_k$ for $k =
  0, \ldots, n$ and by \emph{geodesic} interpolation for $t \in
  [t_k,t_{k+1}]$. Then
  \begin{equation*}
    \barXi^n \xrightarrow[n\to\infty]{} \fB
  \end{equation*}
  in $C^{0,\alpha-\text{H\"{o}l}}\left([0,1],G^2(\R^d)\right)$ for all
  $\alpha<1/2$.
\end{theorem}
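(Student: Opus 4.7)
The plan is the classical two-step strategy: establish convergence of finite-dimensional marginals and tightness in an $\alpha'$-Hölder rough path topology with $\alpha < \alpha' < 1/2$; since the embedding of $C^{0,\alpha'\text{-H\"ol}}([0,1],G^2(\R^d))$ into $C^{0,\alpha\text{-H\"ol}}([0,1],G^2(\R^d))$ is compact (as for any H\"older space of paths into a Polish metric space), tightness in the finer topology together with finite-dimensional convergence will yield weak convergence in the coarser one for every $\alpha<1/2$. Both ingredients are essentially prepared: Lemma~\ref{lem:clt-non-iid} delivers the central limit theorem at a single time, and Proposition~\ref{prop:2} supplies the moment control needed for a Kolmogorov-type H\"older tightness criterion on a Lie-group-valued process.

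For tightness, I would invoke the Kolmogorov--Chentsov criterion in its form for metric-space-valued processes (e.g.\ Theorem~A.13 in Friz--Victoir): it suffices to establish, uniformly in $n$, a bound
\begin{equation*}
  E\bigl[ d(\barXi^n_s, \barXi^n_t)^{4p} \bigr] \le C\,|t-s|^{2p},
\end{equation*}
with $d$ the Carnot--Caratheodory distance. For grid points $s=t_j\le t=t_k$, left-invariance of $d$ together with independence of the increments $\xi^n_{j+1},\ldots,\xi^n_k$ places the sub-walk on $[s,t]$ in precisely the framework of Proposition~\ref{prop:2}, giving the bound with constant independent of $n,j,k$. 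For non-grid $s,t$ one splits off the geodesic end-pieces; since geodesic interpolation scales $d$ linearly on each $[t_{k-1},t_k]$ and $\norm{\xi^n_k}=\sqrt{\dt_k}\,\norm{\xi_{(k)}}$, a single end-piece contributes at most
\begin{equation*}
  \left(\tfrac{t_j-s}{\dt_j}\right)^{4p} E\bigl[\norm{\xi^n_j}^{4p}\bigr]
  = (t_j-s)^{4p}\,\dt_j^{-2p}\,E[\norm{\xi}^{4p}]
  \le (t_j-s)^{2p}\,E[\norm{\xi}^{4p}],
\end{equation*}
where the last inequality uses $t_j-s\le \dt_j$. Choosing $p$ so large that $\tfrac{1}{2}-\tfrac{1}{4p}>\alpha'$ then yields tightness in $\alpha'$-H\"older rough path topology for any prescribed $\alpha'<1/2$.

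For finite-dimensional convergence at times $0=s_0<s_1<\cdots<s_j=1$, the natural strategy is to observe that the increments $\Delta^n_i \coloneqq (\barXi^n_{s_{i-1}})^{-1}\otimes \barXi^n_{s_i}$ are, up to small geodesic corrections at the two endpoints of each sub-interval, products of independent $\xi^n_k$. The boundary corrections have Carnot--Caratheodory norm bounded by $\sqrt{\dt_{k}}\,\norm{\xi_{(k)}}$, which tends to $0$ in probability as $\abs{\mathcal{D}_n}\to 0$. Hence each $\Delta^n_i$ is asymptotically equivalent to the sub-walk supported on the grid points contained in $[s_{i-1},s_i]$, and applying Lemma~\ref{lem:clt-non-iid} on that interval gives convergence in law to the Gaussian with generator matching $\fB_{s_i}\otimes \fB_{s_{i-1}}^{-1}$. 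The added hypothesis $E[\pi_2(\xi^n_k)] =0$ is exactly what makes $a^{i,j}=0$ in the lemma and hence matches the law of L\'evy's area increment, while the structural cubature/centering assumptions ensure $b^{i,j}=\delta_{ij}$ so the diffusion part is that of Brownian motion. Exact independence of the $\Delta^n_i$ in the pre-limit (when the $s_i$ are grid points; a simple approximation otherwise) then passes to independence in the limit, and the multiplicative structure of $\fB$ assembles the marginals.

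The main obstacle, in my view, is the tightness step: one must apply the Kolmogorov criterion in its correct form for $(G^2(\R^d),d)$-valued processes, and ensure the constant in the moment bound is genuinely uniform in $n$ across possibly very irregular meshes $\mathcal{D}_n$. The uniformity is exactly what Proposition~\ref{prop:2} provides, and the geodesic interpolation is compatible because of the favorable scaling above. Once tightness in $\alpha'$-H\"older is combined with fdd convergence, Prokhorov's theorem and the standard subsequence argument identify every weak limit point as the law of $\fB$ on $C^{0,\alpha\text{-H\"ol}}([0,1],G^2(\R^d))$, completing the proof.
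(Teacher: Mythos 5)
Your proposal is correct and follows essentially the same route as the paper: Kolmogorov--Chentsov tightness via Proposition~\ref{prop:2} (with the identical treatment of the geodesic end-pieces and the scaling $(t_j-s)^{4p}\dt_j^{-2p}\le(t_j-s)^{2p}$), combined with Lemma~\ref{lem:clt-non-iid} for identification of the finite-dimensional marginals and then $p\to\infty$ to reach all $\alpha<1/2$. The only (welcome) difference is that you spell out the multi-time finite-dimensional convergence by applying the CLT lemma on each subinterval and controlling the boundary geodesic corrections, a step the paper leaves implicit.
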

\begin{proof}
  Kolmogorov's tightness criterion, see, for instance,~\cite{rev/yor99},
  implies that $\barXi^n$ is tight (in $C^{0,\alpha-\text{H\"ol}}$) provided
  that for any $u,v \in [0,1]$
  \begin{equation}
    \label{eq:kolmogorov}
    \sup_n E\left[ d\left( \barXi^n_v, \barXi^n_u \right)^a \right] \le c
    \abs{v-u}^{1+b},
  \end{equation}
  where $a$, $b$, $c$ are positive constants with $\alpha < b/a$ and $d$
  denotes the Carnot-Caratheodory distance defined by $d(x,y) =
  \norm{x^{-1}y}$. We choose $a = 4p$ and $b=2p-1$, then
  Proposition~\ref{prop:2} implies that~(\ref{eq:kolmogorov}) holds for $u,v
  \in \mathcal{D}_n$. For arbitrary $v<u$, assume that $t_i \le v < t_{i+1}$
  and $t_j \le u < t_{j+1}$ (for $t_i$, $t_{i+1}$, $t_j$, $t_{j+1} \in
  \mathcal{D}_n$). Using (by the geodesic interpolation)
  \begin{equation}\label{eq:geodesic-interpolation}
    d\left( \barXi^n_v,\barXi^n_{t_{i+1}} \right) = \f{t_{i+1} - v}{\dt_{i+1}}
    d\left( \barXi^n_{t_{i}}, \barXi^n_{t_{i+1}} \right), \quad d\left(
      \barXi^n_{t_j},\barXi^n_u \right) = \f{u - t_j}{\dt_{j+1}} d\left(
      \barXi^n_{t_j}, \barXi^n_{t_{j+1}} \right)
  \end{equation}
  and the triangle inequality, we obtain
  \begin{align*}
    E\left[ d\left( \barXi^n_v, \barXi^n_u \right)^a \right] &\le
    \tilde{c} \left[ \left( \f{t_{i+1} - v}{\dt_{i+1}} \right)^{2p}
      (t_{i+1}-v)^{2p} + (t_j - t_{i+1})^{2p} + \left(\f{u - t_j}{\dt_{j+1}}
      \right)^{2p} (u - t_j)^{2p} \right] \\
    &\le c \abs{u-v}^{2p},
  \end{align*}
  for some constant $c$ only depending on $p$. 

  This shows that the sequence of stochastic processes $\barXi^n$ is tight in
  rough-path-topology. Moreover, Lemma~\ref{lem:clt-non-iid} shows that the
  finite-dimensional marginal distributions of $\barXi^n$ converge to those of
  $\fB$. Thus, we obtain the theorem for $\alpha < \f{4p}{2p-1}$ and, with $p
  \to \infty$, for any $\alpha < 1/2$.
\end{proof}

\begin{proof}[Proof of Theorem~\ref{thr:1}]
  In Theorem~\ref{thr:donsker-for-geodesic-rws} above, we have already proved
  our main result for $N = 2$ and for the special case of geodesic
  interpolation, i.e., for the case that $S_2\left( W^{\mathcal{D}_n}
  \right)_{0,t}$, $t \in [0,1]$, provides a geodesic interpolation between the
  grid points, i.e., between $S_2\left( W^{\mathcal{D}_n}\right)_{0,t_k}$,
  $t_k \in \mathcal{D}_n$. We note that the extension of the result to $N > 2$
  is immediate since the Lyons lift $S_N: G^2(\R^d) \to G^N(\R^d)$ is
  continuous in rough path topology, see, for instance,~\cite[Corollary
  9.11]{fri/vic10}.

  We need to give a proof for the case $N=2$ but without geodesic
  interpolation. As before, assume that we are given $t_i \le v < t_{i+1}$,
  $t_j \le u < t_{j+1}$, with $t_i, t_j, t_{i+1}, t_{j+1} \in
  \mathcal{D}_n$. For any path $\omega \in \mathcal{H}$, the Cameron-Martin
  space, we have, see~\cite[Proposition 15.7]{fri/vic10},
  \begin{equation*}
    \abs{\omega}_{1-\text{var};[s,t]} \le \sqrt{\abs{t-s}}
    \norm{\omega}_{\mathcal{H};[s,t]},
  \end{equation*}
  where $\abs{\cdot}_{1-\text{var};[s,t]}$ denotes the first variation of a
  path restricted to $[s,t]$ and $\norm{\cdot}_{\mathcal{H};[s,t]}$ denotes
  the Cameron-Martin norm, likewise restricted to $[s,t]$, i.e.,
  \begin{equation*}
    \norm{\omega}^2_{\mathcal{H};[s,t]} = \int_s^t \abs{\dot{\omega}(u)}^2 du.
  \end{equation*}
  Notice that
  \begin{equation*}
    \norm{W^{\mathcal{D}_n}}^2_{\mathcal{H};[t_j,u]} = \int_{t_j}^u
    \abs{\dot{W^{\mathcal{D}_n}_t}}^2 dt = \f{1}{\dt_{j+1}} \int^u_{t_j}
    \abs{\dot{W}\left( \f{t-t_j}{\dt_{j+1}} \right)}^2 dt
    \le \int_0^1 \abs{\dot{W}_t}^2 dt = \norm{W}^2_{\mathcal{H}}.
  \end{equation*}
  Therefore, we can bound
  \begin{align*}
    d\left( S_2\left( W^{\mathcal{D}_n} \right)_{0,t_j}, S_2\left(
        W^{\mathcal{D}_n} \right)_{0,u} \right) &= \norm{S_2\left(
        W^{\mathcal{D}_n} \right)_{t_j,u}} \\
    &\le \abs{W^{\mathcal{D}_n}}_{1-\text{var};[t_j,u]} \\
    &\le \sqrt{u-t_j} \norm{W}_{\mathcal{H}}.
  \end{align*}
  By Assumption~\ref{ass:interpolation}, we get
  \begin{equation}
    \label{eq:7}
    E\left[ d\left( S_2\left( W^{\mathcal{D}_n} \right)_{0,t_j}, S_2\left(
          W^{\mathcal{D}_n} \right)_{0,u} \right)^{4p} \right] \le
    (u-t_j)^{2p} E\left[ \norm{W}^{4p}_{\mathcal{H}} \right] \le C (u-t_j)^{2p},
  \end{equation}
  and similarly
  \begin{equation}
    \label{eq:8}
        E\left[ d\left( S_2\left( W^{\mathcal{D}_n} \right)_{0,v}, S_2\left(
          W^{\mathcal{D}_n} \right)_{0,t_{i+1}} \right)^{4p} \right] \le C
    (t_{i+1} - v)^{2p}.
  \end{equation}
  Therefore, we can show tightness of the sequence of processes $S_2\left(
    W^{\mathcal{D}_n} \right)_{0,\cdot}$ as in the proof of
  Theorem~\ref{thr:donsker-for-geodesic-rws} above.
\end{proof}

Now we finally return to the Donsker theorem for cubature paths with an
adjourned, Lipschitz component $h$.
\begin{proof}[Proof of Corollary~\ref{cor:donsker-with-drift}.]
  Let $h^{\mathcal{D}_n} \coloneqq \Prefix^{h}{W}^{\mathcal{D}_n,0}$ denote
  the $0$-component of $\Prefix^{h}{W}^{\mathcal{D}_n}$. Moreover, let
  $\id:[0,1] \to [0,1]$ denote the identity, $\id(t) = t$. We note that
  $h^{\mathcal{D}_n}$ converges to $\id$ in
  $C^{0,\beta-\text{H\"{o}l}}([0,1];\R)$ for any $\beta < 1$. Indeed, let $0 <
  t < 1$ and let $t_i \le t < t_{i+1}$ denote the grid points closest to
  $t$. Then apparently
  \begin{equation*}
    h^{\mathcal{D}_n}(t) = t_i + \dt_{i+1} h\left( \f{t-t_i}{\dt_{i+1}}
    \right).
  \end{equation*}
  For $h(0) = 0$, we get $\abs{h(t)} \le L \abs{t}$, where $L$ is the
  Lipschitz constant of $h$.  From this one can easily conclude that
  $\norm{h^{\mathcal{D}_n} - \id}_\infty \le (1+L) \abs{\mathcal{D}_n}$ and
  $h^{\mathcal{D}_n}$ converges to $\id$ uniformly on $[0,1]$ with uniform
  Lipschitz bounds for $\abs{\mathcal{D}_n} \to 0$. This implies the
  convergence in $\beta$-H\"{o}lder topology for any $\beta < 1$.

  By this result together with Theorem~\ref{thr:1} for the convergence of
  $S_2(W)$, we can immediately conclude that $S_2\left(
    \Prefix^{h}{W}^{\mathcal{D}_n} \right)_{0,\cdot}$ converges to
  $S_2(B)_{0,\cdot}$ in $C^{0,\alpha-\text{H\"{o}l}}\left( [0,1]; G^2_1(\R^d)
  \right)$. By continuity of the Lyons lift $S_2(x) \mapsto S_N(x)$ in rough
  path topology, the statement of the corollary follows.
\end{proof}

\bibliographystyle{plain}
\bibliography{donsker.bib}

\end{document}